\documentclass{article}

\usepackage{authblk}
\usepackage{a4wide}
\usepackage{amsthm}
\usepackage{graphicx}
\usepackage{amssymb}
\usepackage{amsmath}
\usepackage{ascmac}
\usepackage{setspace}
\usepackage{float}
\usepackage[dvips,usenames]{color}
\usepackage{colortbl}
\usepackage{algorithm}
\usepackage{algorithmic}
\usepackage{setspace}
\usepackage{tikz-cd}
\usepackage{comment}
\newtheorem{Theorem}[equation]{Theorem}

\newtheorem{Lemma}[equation]{Lemma}

\theoremstyle{definition}
\newtheorem{Definition}[equation]{Definition}

\theoremstyle{remark}

\numberwithin{equation}{section}

\DeclareMathOperator{\ev}{ev}

\DeclareMathOperator{\ad}{ad}

\newcommand{\ve}{\varepsilon}

\allowdisplaybreaks
\begin{document}
\title{The images of the higher generators via the evaluation map for the affine Yangian of type $A$}
\author{Mamoru Ueda\thanks{mueda@ms.u-tokyo.ac.jp}}
\date{}
\maketitle
\begin{abstract}
The affine Yangian associated with $\widehat{\mathfrak{sl}}(n)$ has several presentations: the current presentation, the minimalistic presentation and so on. The evaluation map for the affine Yangian was given by using the minimalistic presentation. One of the issues about the evaluation map is that the images of the evaluation maps are unkown except on finitely many generators.
In this article, we write down the images of the higher generators of the current presentation via the evaluation map for the affine Yangian of type $A$ explicitly.
\end{abstract}
\section{Introduction}
For a finite dimensional simple Lie algebra $\mathfrak{g}$, Drinfeld (\cite{D1},\cite{D2}) introduced the finite Yangian $Y_\hbar(\mathfrak{g})$ is a quantum group which is a deformation of the current algebra $\mathfrak{g}[u]$. The finite Yangian $Y_\hbar(\mathfrak{g})$ has a presentation whose generators $\{h_{i,r},x^\pm_{i,r}\}_{i\in I,r\geq0}$ correspond to $\{h_i\otimes u^r,x^\pm\otimes u^{r}\}_{i\in I,r\geq0}$, where $\{h_i,x^\pm_i\}_{i\in I}$ are Chevalley generators of $\mathfrak{g}$. This presentation is called the current presentation of the finite Yangian $Y_\hbar(\mathfrak{g})$. 
The Yangian associated with $\mathfrak{gl}(n)$ was also defined and we denote it by $Y_\hbar(\mathfrak{gl}(n))$. It is known that the finite Yangian $Y_\hbar(\mathfrak{gl}(n))$ is a deformation of the current algebra $\mathfrak{gl}[u]$. The Yangian $Y_\hbar(\mathfrak{gl}(n))$ has the RTT presentation whose generators are $\{T^{(r)}_{i,j}\}_{1\leq i,j\leq n,r\geq0}$, which corresponds to $\{\delta_{r,0}\delta_{i,j}+\delta(r\geq1)E_{i,j}u^{r-1}\}$ in $\mathfrak{gl}(n)[u]$. The embedding
\begin{equation*}
\widetilde{\iota}\colon Y_\hbar(\mathfrak{sl}(n))\to Y_\hbar(\mathfrak{gl}(n))
\end{equation*} 
was explicitly given by using the current presentation and the RTT presentation.

By using the current presentation, we can define the Yangian $Y_\hbar(\mathfrak{g})$ associated with a symmetrizable Kac-Moody Lie algebra $\mathfrak{g}$. As for the case that $\mathfrak{g}=\widehat{\mathfrak{sl}}(n)$, Guay (\cite{Gu2},\cite{Gu1}) defined the two parameter affine Yangian $Y_{\hbar,\ve}(\widehat{\mathfrak{sl}}(n))$, which is a deformation of the universal central extension of $\mathfrak{sl}(n)[u^{\pm1},v]$. Similarly to finite Yangians, the generators of the current presentation of $Y_{\hbar,\ve}(\widehat{\mathfrak{sl}}(n))$ are $\{h_{i,r},x^\pm_{i,r}\mid0\leq i\leq n-1,r\geq0\}$. In \cite{GNW}, Guay-Nakajima-Wendlandt gave a new presentation of $Y_{\hbar,\ve}(\widehat{\mathfrak{sl}}(n))$ whose generators are $\{h_{i,r},x^\pm_{i,r}\mid0\leq i\leq n-1,r=0,1\}$ and this presentation is called the minimalistic presentation. 

Recently, by using the minimalistic presentation, the relationships between affine Yangians and $W$-algebras have been studied. For instance, it was shown in \cite{U2} and \cite{KU} that there exists a surjective homomorphism 
\begin{equation*}
\Phi\colon Y_{\hbar,\ve}(\widehat{\mathfrak{sl}}(n))\to\mathcal{U}(\mathcal{W}^k(\mathfrak{gl}(ln),(l^n))),
\end{equation*}
where $\mathcal{U}(\mathcal{W}^k(\mathfrak{gl}(ln),(l^n)))$ is the universal enveloping algbera of a rectangular $W$-algebra associated with $\mathfrak{gl}(ln)$ and a nilpotent element of type $(l^n)$. It is expected that the universal enveloping algebra of a rectangular $W$-algebra of type $A$ can be written down as a quotient algebra of the affine Yangian $Y_{\hbar,\ve}(\widehat{\mathfrak{sl}}(n))$ via the homomorphism $\Phi$. One of the difficulties of this problem is that the images of $\Phi$ are unkown except for finite generators $\{h_{i,r},x^\pm_{i,r}\mid0\leq i\leq n-1,r=0,1\}$ since we construct $\Phi$ by using the minimalistic presentation of the affine Yangian.

In the case $l=1$, $\mathcal{U}(\mathcal{W}^k(\mathfrak{gl}(ln),(l^n)))$ coincides with the standard degreewise completion of the universal enveloping algebra of $\widehat{\mathfrak{gl}}(n)=\mathfrak{gl}(n)\otimes\mathbb{C}[u]\oplus\mathbb{C}c$. By using the minimalistic presentation, Kodera \cite{K1} gave a surjective homomorphism 
\begin{equation*}
\ev_{\hbar,\ve}\colon Y_{\hbar,\ve}(\widehat{\mathfrak{sl}}(n))\to\mathcal{U}(\widehat{\mathfrak{gl}}(n)),
\end{equation*}
where $\mathcal{U}(\widehat{\mathfrak{gl}}(n))$ is the standard degreewise completion of $U(\widehat{\mathfrak{sl}}(n))$. This homomrphism is called {\it the evaluation map}.  In this article, we give the images of $\{x^\pm_{i,r}\mid1\leq i\leq n-1,r\geq0\}$ via the evaluation map.

For this purpose, in Section~4, we introduce a new associative algebra $y_\hbar(\mathfrak{gl}(n))$ whose generators are $\{T^{(r)}_{i,j}\}_{1\leq i,j\leq n,r\geq0}$ with some of the defining relations of $Y_\hbar(\mathfrak{gl}(n))$. By the definition of $y_\hbar(\mathfrak{gl}(n))$, $Y_\hbar(\mathfrak{gl}(n))$ becomes a quotient algebra of $y_\hbar(\mathfrak{gl}(n))$. One of the features of $y_\hbar(\mathfrak{gl}(n))$ is the existence of an embedding
\begin{equation*}
\iota\colon Y_\hbar(\mathfrak{sl}(n))\to y_\hbar(\mathfrak{gl}(n)).
\end{equation*}
The images of $\{x^\pm_{i,r}\mid 1\leq i\leq n-2,r\geq0\}$ has the same formula as the homomorphism $\widetilde{\iota}$. Another feauture of $y_\hbar(\mathfrak{gl}(n))$ is the following theorem.
\begin{Theorem}
There exists a homomorphism
\begin{equation*}
\ev_\hbar\colon y_\hbar(\mathfrak{gl}(n))\to\mathcal{U}(\widehat{\mathfrak{gl}}(n))
\end{equation*}
determined by
\begin{align*}
\ev_\hbar(T^{(1)}_{i,j})&=E_{i,j},\\
\ev_\hbar(T^{(r)}_{i,j})&=\sum_{p=2}^m\limits\sum_{\substack{1\leq x_1,\cdots,x_{p}\leq n,\\z_1,\cdots,z_{p}\geq0}}\limits f^m_p((z_1+1)c,\cdots,(z_p+1)c)\\
&\qquad\qquad\qquad E_{i,x_1}t^{-z_1-1}E_{x_1,x_2}t^{z_1-z_2}\cdots E_{x_{p-1},x_p}t^{z_{p-1}-z_p}E_{x_p,j}t^{z_p+1},
\end{align*}
where we set a symmetric polynomial
\begin{align*}
f^m_p(z_1,\cdots,z_p)&=\prod_{1\leq i_1\leq\cdots\leq i_{m-p}\leq p}z_{i_1}\cdots z_{i_{m-p}}.
\end{align*}
\end{Theorem}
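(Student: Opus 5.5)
The plan is to check the defining relations of $y_\hbar(\mathfrak{gl}(n))$ (Section~4) directly on the proposed images. The index $m$ in the formula is the index $r$ of $T^{(r)}_{i,j}$, and $f^m_p$ is the complete homogeneous symmetric polynomial $h_{m-p}(z_1,\dots,z_p)$. Since $y_\hbar(\mathfrak{gl}(n))$ keeps only some of the relations of $Y_\hbar(\mathfrak{gl}(n))$, I expect two families to be checked. The first is the $\mathfrak{gl}(n)$-relations $[T^{(1)}_{i,j},T^{(r)}_{k,l}]=\delta_{j,k}T^{(r)}_{i,l}-\delta_{i,l}T^{(r)}_{k,j}$. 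The second is the RTT-type relation in which one entry has low degree, namely
\begin{equation*}
[T^{(s+1)}_{i,j},T^{(r)}_{k,l}]-[T^{(s)}_{i,j},T^{(r+1)}_{k,l}]=\hbar\bigl(T^{(r)}_{k,j}T^{(s)}_{i,l}-T^{(s)}_{k,j}T^{(r)}_{i,l}\bigr)
\end{equation*}
for small $s$ (essentially $s=0,1$). Before starting, I check that each image is a well-defined element of the degreewise completion $\mathcal{U}(\widehat{\mathfrak{gl}}(n))$. For fixed degree, the rightmost factor $E_{x_p,j}t^{z_p+1}$ has positive mode, so only finitely many terms survive on any vector of bounded degree.

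For the $\mathfrak{gl}(n)$-relations, write $E_{a,b}=E_{a,b}t^0$. The image of $T^{(r)}_{i,j}$ is a full contraction of the internal indices $x_1,\dots,x_p$ with free indices $i$ and $j$. Commuting $E_{a,b}t^0$ through each factor $E_{x,y}t^{k}$ produces no central term, because the mode $0$ pairs with $k$ only when $k=0$, and then the cocycle $k\,\delta_{k,0}$ vanishes. Hence the commutator telescopes: the contributions at the internal indices cancel pairwise, leaving only the terms at the endpoints $i$ and $j$. This gives the required relation and is routine.

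The main step is the relation with $s=1$, i.e. computing $[E_{i,j},\ev_\hbar(T^{(r+1)}_{k,l})]-[\ev_\hbar(T^{(2)}_{i,j}),\ev_\hbar(T^{(r)}_{k,l})]$, where $\ev_\hbar(T^{(2)}_{i,j})=\sum_{x,z}(z+1)c\,E_{i,x}t^{-z-1}E_{x,j}t^{z+1}$. The $s=0$ case holds since $T^{(0)}_{i,j}=\delta_{i,j}$ is central; any further cases would be handled the same way. Commuting the quadratic operator past a product of currents splits each term into two pieces. The first consists of Lie-bracket terms, which lengthen the chain by one factor and shift one mode. The second consists of central terms $(z+1)c$, which come from $[E_{a,b}t^{z+1},E_{b',a'}t^{-z-1}]$ and multiply the coefficient. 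I would reindex the summation variables so that both sides are written in the normal-ordered basis of chains $E_{k,x_1}t^{-z_1-1}\cdots E_{x_p,l}t^{z_p+1}$. Terms with the same chain structure are then collected.

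The identity to prove then reduces to a recursion for the coefficients:
\begin{equation*}
h_{m-p}(z_1,\dots,z_p)=h_{m-p}(z_1,\dots,z_{p-1})+z_p\,h_{m-p-1}(z_1,\dots,z_p),
\end{equation*}
evaluated at $z_q\mapsto (z_q+1)c$. The first summand matches chains that grow by one factor, and the $z_p$ term matches the central contributions. The $\hbar$-quadratic right-hand side should come from the reordering needed to bring products of two chains back into normal order. I expect this bookkeeping to be the main obstacle. It requires tracking the mode-shift of the summation variables, the boundary terms at $p=2$ and at $z=0$, and showing that the leftover products of two chains assemble exactly into $\hbar(T^{(r)}_{k,j}T^{(1)}_{i,l}-T^{(1)}_{k,j}T^{(r)}_{i,l})$. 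The first thing I would try is to prove the identity by induction on $r$, using the case $r=2$ (checked by hand) as the base.
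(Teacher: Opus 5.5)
Your overall route is the paper's: verify \eqref{ga1}--\eqref{ga3} by commuting the image of $T^{(2)}_{i,i}$ through the chains and re-summing with identities for complete homogeneous polynomials. The off-diagonal $s=1$ relations you also plan to check are not needed, but they follow from the diagonal ones by taking commutators with $E_{i,j}$. The paper checks even less: only the case $k=i$, $l=j\neq i$ of \eqref{ga3}, with the remaining cases handled by the anti-automorphism $\omega$.

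However, the operator you commute through is wrong. The paper's proof uses $\ev_\hbar(T^{(2)}_{i,j})=\hbar\sum_{x,\,z\geq0}E_{i,x}t^{-z-1}E_{x,j}t^{z+1}$, with a coefficient that does not depend on the mode. This is consistent with the constant coefficient $\hbar$ in Kodera's $\ev_{\hbar,\ve}(H_{i,1})$ and with the formula for $\iota(\widetilde{H}_{i,1})$. Your $\sum(z+1)c\,E_{i,x}t^{-z-1}E_{x,j}t^{z+1}$ is not this. The weight $(z+1)c$ first appears on the quadratic chain of $\ev_\hbar(T^{(3)}_{i,j})$, where it is produced by the central term of $[E_{x,i}t^{z+1},E_{i,x}t^{-z-1}]$. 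In the indexing the proof actually uses, the chain $E_{i,x_1}t^{-z_1-1}\cdots E_{x_p,j}t^{z_p+1}$ in $\ev_\hbar(T^{(r)}_{i,j})$ carries $\hbar^{r-1}h_{r-1-p}((z_1+1)c,\dots,(z_p+1)c)$ for $1\leq p\leq r-1$.

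With your coefficient, \eqref{ga2} already fails. In $\mathcal{U}(\widehat{\mathfrak{gl}}(n))$, where $c$ is a free central element, its left side would be divisible by $c^2$ and its right side is not. The paper's check of \eqref{ga2} genuinely needs mode-independence: the shift of summation variables $s\mapsto s+v+1$ in that computation reduces the double sums to boundary terms only because the coefficient does not move.

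Second, the coefficient bookkeeping does not reduce to the recursion \eqref{rel1}. That recursion only assembles the final answer. The chain-lengthening bracket at the first factor and the central term $(z_1+1)c$ at that same factor combine into the coefficients of $\ev_\hbar(T^{(r+1)}_{i,j})$; this is \eqref{e1-0} together with \eqref{e4-1}. Most terms come instead from brackets with interior and last factors, and from non-lengthening pieces such as $E_{i,x}t^{-u-1}E_{i,i}t^{u-z}$. To return these to chain form you must shift a summation variable, $z_{s+1}\mapsto z_{s+1}+u+1$. The same monomial then appears with both $f^m_n(\dots,z_{s+1}+u+1,\dots)$ and $f^m_n(\dots,z_{s+1},\dots)$.

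The missing key lemma is the divided-difference identity \eqref{rel2}, $f^m_n(\dots,z_n+a)-f^m_n(\dots,z_n)=ac\,f^m_{n+1}(\dots,z_n,z_n+a)$. It converts each such difference into $(u+1)c$ times the coefficient of a chain one factor longer. These pieces then cancel, telescoping in the position index $s$, against central terms produced elsewhere: at neighbouring positions of the longer chains and at the two ends. What survives is exactly $-\hbar\bigl(E_{i,i}\ev_\hbar(T^{(r)}_{i,j})-\ev_\hbar(T^{(r)}_{i,i})E_{i,j}\bigr)$. It comes from the two end factors (\eqref{e1-1} and \eqref{e3-2}), not from reordering products of chains.

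Without \eqref{rel2} these cross terms do not close up. An induction on $r$ does not remove the need for it, because the level-$r$ relation must be matched against the explicit formula for $\ev_\hbar(T^{(r+1)})$, which the level-$(r-1)$ case does not involve.
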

The subalgebra of the affine Yangian $Y_{\hbar,\ve}(\widehat{\mathfrak{sl}}(n))$ generated by $\{h_{i,r},x^\pm_{i,r}\mid 1\leq i\leq n-1,r\geq0\}$ is isomorphic to $Y_\hbar(\mathfrak{sl}(n))$.
We identify this subalgebra with $Y_\hbar(\mathfrak{sl}(n))$ and denote this subalgebra by $Y_\hbar(\mathfrak{sl}(n))$.
\begin{Theorem}\label{ref}
The following relation holds:
\begin{equation*}
\ev_\hbar\circ\iota=\ev_{\hbar,\ve}|_{Y_\hbar(\mathfrak{sl}(n))}.
\end{equation*}
In particular, we can write down $\{\ev_{\hbar,\ve}(x^\pm_{i,r})\mid 1\leq i\leq n-1,r\geq0\}$ explicitly.
\end{Theorem}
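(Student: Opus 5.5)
Both $\ev_\hbar\circ\iota$ and $\ev_{\hbar,\ve}|_{Y_\hbar(\mathfrak{sl}(n))}$ are algebra homomorphisms out of $Y_\hbar(\mathfrak{sl}(n))$. It therefore suffices to check that they agree on a generating set. Here I will use the minimalistic presentation of the finite Yangian: $Y_\hbar(\mathfrak{sl}(n))$ is generated by $\{h_{i,r},x^\pm_{i,r}\mid 1\leq i\leq n-1,\ r=0,1\}$. In fact $h_{i,0},x^\pm_{i,0}$ together with $h_{i,1}$ suffice, since $x^\pm_{i,r+1}=\pm\frac{1}{2}[\tilde h_{i,1},x^\pm_{i,r}]$ with the usual shifted $\tilde h_{i,1}=h_{i,1}-\frac{\hbar}{2}h_{i,0}^2$. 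So the plan is to compare the two maps only on $h_{i,0},x^\pm_{i,0},h_{i,1}$.

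\textbf{Step 1 (degree zero).} By the construction of $\iota$, the images of $h_{i,0},x^+_{i,0},x^-_{i,0}$ are $T^{(1)}_{i,i}-T^{(1)}_{i+1,i+1}$, $T^{(1)}_{i,i+1}$ and $T^{(1)}_{i+1,i}$, matching the formula for $\widetilde{\iota}$. Applying $\ev_\hbar(T^{(1)}_{i,j})=E_{i,j}$ gives $E_{i,i}-E_{i+1,i+1}$, $E_{i,i+1}$ and $E_{i+1,i}$. These are exactly Kodera's values of $\ev_{\hbar,\ve}$ on these generators, since his evaluation map restricts to the natural inclusion on the finite part $\mathfrak{sl}(n)\subset\widehat{\mathfrak{gl}}(n)$.

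\textbf{Step 2 (degree one).} I will compute $\iota(h_{i,1})$. As for $\widetilde{\iota}$, it is $T^{(2)}_{i,i}-T^{(2)}_{i+1,i+1}$ plus a quadratic correction in the $T^{(1)}$'s, coming from the quasideterminant expansion of the Drinfeld generators. It involves terms like $-\hbar\,T^{(1)}_{i,i}T^{(1)}_{i+1,i+1}$ and $\hbar\sum_{k\le i} T^{(1)}_{i,k}T^{(1)}_{k,i}$-type sums, with the exact shape taken from the paper's definition of $\iota$. Next I will apply $\ev_\hbar$ with $m=2$. There only $p=2$ occurs and $f^2_2=1$, so
\[
\ev_\hbar(T^{(2)}_{i,j})=\sum_{x_1,x_2}\sum_{z_1,z_2\geq0}E_{i,x_1}t^{-z_1-1}E_{x_1,x_2}t^{z_1-z_2}E_{x_2,j}t^{z_2+1}.
\]
Then I compare the result with Kodera's formula for $\ev_{\hbar,\ve}(h_{i,1})$. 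That formula has the shape
\[
\ev_{\hbar,\ve}(h_{i,1})=a\,h_{i,0}+\tfrac{\hbar}{2}(\text{quadratic in }E_{\cdot,\cdot})+\hbar\sum_{s\ge0}\sum_{k}\bigl(\text{normally ordered }E_{i,k}t^{-s-1}E_{k,i}t^{s+1}\text{-terms}\bigr)-(\cdots)_{i+1},
\]
with $a$ depending on $\ve$, $\hbar$ and $c$. The main computation is to normally order the cubic expression above, moving $E_{\cdot,\cdot}t^{z_1-z_2}$ past the outer factors using $[E_{a,b}t^r,E_{c,d}t^s]=\delta_{bc}E_{a,d}t^{r+s}-\delta_{ad}E_{c,b}t^{r+s}+r\delta_{r+s,0}\delta_{bc}\delta_{ad}c$. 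After taking the difference for $i$ and $i+1$, the genuinely cubic parts should cancel against each other modulo terms absorbed by the $\hbar$-quadratic correction. The quadratic remainders and central terms should then reproduce Kodera's expression.

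\textbf{Main obstacle.} I expect the hard part to be exactly this comparison in degree one. It involves convergence and reordering in the degreewise completion, and the central-term bookkeeping must produce the precise linear coefficient involving $\ve$ and $c$. If the direct comparison of $h_{i,1}$ is too messy, I will first try an alternative: compare $x^+_{i,1}$ instead. One can use $\iota(x^+_{i,1})=T^{(2)}_{i,i+1}+(\text{correction})$ and then recover $h_{i,1}=[x^+_{i,1},x^-_{i,0}]$, since both maps are homomorphisms that already agree on $x^-_{i,0}$ by Step 1. Agreement on generators gives $\ev_\hbar\circ\iota=\ev_{\hbar,\ve}|_{Y_\hbar(\mathfrak{sl}(n))}$. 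The explicit formulas for $\ev_{\hbar,\ve}(x^\pm_{i,r})$ then follow by composing the known formula for $\iota(x^\pm_{i,r})$ with the formula of the preceding theorem.
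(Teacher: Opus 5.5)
Your reduction to generators matches the paper's: both maps are homomorphisms on $Y_\hbar(\mathfrak{sl}(n))$, which is generated by the generators with $r=0,1$, and your Step~1 is fine. The gap is in Step~2. There you take $\ev_\hbar(T^{(2)}_{i,j})$ to be the cubic element $\sum E_{i,x_1}t^{-z_1-1}E_{x_1,x_2}t^{z_1-z_2}E_{x_2,j}t^{z_2+1}$, and you also drop the factor $\hbar^{r-1}=\hbar$ from Theorem~\ref{Main}. You then expect the cubic parts for $i$ and $i+1$ to cancel after normal ordering. They cannot cancel. The two cubic sums have nonzero and different symbols in the associated graded algebra: for instance, the monomial $E_{i,i}t^{-1}\,E_{i,i}\,E_{i,i}t$ occurs only in the sum for $i$, with positive coefficient. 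On the other side, $\iota(\widetilde H_{i,1})-(T^{(2)}_{i,i}-T^{(2)}_{i+1,i+1})$ is quadratic in the $T^{(1)}$'s, and Kodera's $\ev_{\hbar,\ve}(H_{i,1})$ is quadratic. So with your value of $\ev_\hbar(T^{(2)}_{i,j})$ the two maps would differ on $\widetilde H_{i,1}$, and the computation you plan cannot close.

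The missing point is that $T^{(2)}_{i,j}$ is sent to the term with a single inner index:
\[
\ev_\hbar(T^{(2)}_{i,j})=\hbar\sum_{s\geq0}\sum_{k=1}^{n}E_{i,k}t^{-s-1}E_{k,j}t^{s+1}.
\]
This is the value the paper itself uses when it checks \eqref{ga2} in the proof of Theorem~\ref{Main}; the indexing $\sum_{p=2}^m$ in the introduction is misleading at $r=2$. With this value there is no reordering, no convergence issue and no central term. Split off the $s=0$ part of Kodera's sums $\sum_{s\geq0}\sum_{k\leq i}E_{\cdot,k}t^{-s}E_{k,\cdot}t^{s}$; this gives
\[
\ev_{\hbar,\ve}(H_{i,1})=\ev_\hbar(T^{(2)}_{i,i}-T^{(2)}_{i+1,i+1})-\tfrac{i-1}{2}\hbar(E_{i,i}-E_{i+1,i+1})-\hbar E_{i,i}E_{i+1,i+1}+\hbar\sum_{k=1}^{i}(E_{i,k}E_{k,i}-E_{i+1,k}E_{k,i+1}).
\]
Subtracting $\frac{\hbar}{2}(E_{i,i}-E_{i+1,i+1})^2$ then reproduces $\ev_\hbar$ of the formula for $\iota(\widetilde H_{i,1})$ term by term.

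In the same way, $\iota(X^+_{i,1})=\frac12[\iota(\widetilde H_{i,1}),T^{(1)}_{i,i+1}]=T^{(2)}_{i,i+1}-\frac{i-1}{2}\hbar T^{(1)}_{i,i+1}+\hbar\sum_{k\leq i}T^{(1)}_{i,k}T^{(1)}_{k,i+1}$. Applying $\ev_\hbar$ to this gives exactly Kodera's $\ev_{\hbar,\ve}(X^+_{i,1})$, and this check on $X^\pm_{i,0}$ and $X^\pm_{i,1}$ is the one the paper actually performs.

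This also disposes of the obstacle you anticipated. The linear coefficient is $-\frac{i-1}{2}\hbar$ on both sides, and neither $\ve$ nor $c$ appears in degree $\leq1$ for $i\neq0$. The central charge $c$ first enters through $\ev_\hbar(T^{(r)})$ with $r\geq3$.
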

As for the finite Yangians, there exists a surjective homomorphism $\ev\colon Y_\hbar(\mathfrak{gl}(n))$ and the kernel of $\ev$ is generated by $\{T^{(r)}_{i,j}\mid r\geq2\}$.
We expect that the kernel of $\ev_{\hbar,\ve}$ and $\Phi$ can be obtained from this result.
\section*{Acknowledgement}
This research was supported by JSPS Research Fellowship for Young Scientists (PD), Grant Number JP25KJ0038.

\section{Affine Yangian of type $A$}
We recall the definition of the affine Yangian of type $A$. Let $(a_{i,j})_{0\leq i,j\leq n-1}$ be 
\begin{align*}
a_{i,j}&=\begin{cases}
2&\text{ if }i=j,\\
-1&\text{ if }i=j\pm1,\\
-1&\text{ if }(i,j)=(n-1,0),(0,n-1),\\
0&\text{ otherwise}.
\end{cases}
\end{align*}
The matrix $(a_{i,j})_{0\leq i,j\leq n-1}$ is the Cartan matrix of $\widehat{\mathfrak{sl}}(n)$.
\begin{Definition}[Definition~3.2 in \cite{Gu2} and Definition~2.3 in \cite{Gu1}]\label{Def32}
Suppose that $n\geq3$. For complex numbers $\hbar,\ve$, the affine Yangian $Y_{\hbar,\ve}(\widehat{\mathfrak{sl}}(n))$ is the associative algebra generated by 
\begin{equation*}
\{X^\pm_{i,r},H_{i,r}\mid 0\leq i\leq n-1,r\in\mathbb{Z}_{\geq0}\}
\end{equation*}
subject to the following defining relations:
\begin{gather}
[H_{i,r}, H_{j,s}] = 0,\label{Eq1.1}\\
[X_{i,r}^{+}, X_{j,s}^{-}] = \delta_{i,j} H_{i, r+s},\label{Eq1.2}\\
[H_{i,0}, X_{j,r}^{\pm}] = \pm a_{i,j} X_{j,r}^{\pm},\label{Eq1.4}\\
[H_{i, r+1}, X_{j, s}^{\pm}] - [H_{i, r}, X_{j, s+1}^{\pm}] = \pm a_{i,j}\dfrac{\hbar}{2} \{H_{i, r}, X_{j, s}^{\pm}\},\text{ if }(i,j)\neq(0,n-1),(n-1,0),\label{Eq1.5}\\
[H_{0, r+1}, X_{n-1, s}^{\pm}] - [H_{0, r}, X_{n-1, s+1}^{\pm}]= \mp\dfrac{\hbar}{2} \{H_{0, r}, X_{n-1, s}^{\pm}\}+(\ve+\dfrac{n}{2}\hbar) [H_{0, r}, X_{n-1, s}^{\pm}],\label{Eq1.6}\\
[H_{n-1, r+1}, X_{0, s}^{\pm}] - [H_{n-1, r}, X_{0, s+1}^{\pm}]= \mp\dfrac{\hbar}{2} \{H_{n-1, r}, X_{0, s}^{\pm}\}-(\ve+\dfrac{n}{2}\hbar) [H_{n-1, r}, X_{0, s}^{\pm}],\label{Eq1.7}\\
[X_{i, r+1}^{\pm}, X_{j, s}^{\pm}] - [X_{i, r}^{\pm}, X_{j, s+1}^{\pm}] = \pm a_{i,j}\dfrac{\hbar}{2} \{X_{i, r}^{\pm}, X_{j, s}^{\pm}\}\text{ if }(i,j)\neq(0,n-1),(n-1,0),\label{Eq1.8}\\
[X_{0, r+1}^{\pm}, X_{n-1, s}^{\pm}] - [X_{0, r}^{\pm}, X_{n-1, s+1}^{\pm}]= \mp\dfrac{\hbar}{2} \{X_{0, r}^{\pm}, X_{n-1, s}^{\pm}\} + (\ve+\dfrac{n}{2}\hbar) [X_{0, r}^{\pm}, X_{n-1, s}^{\pm}],\label{Eq1.9}\\
\sum_{\sigma\in S_{1-a_{i,j}}}\ad(X_{i,r_{\sigma(1)}}^{\pm})\cdots\ad(X_{i,r_{\sigma(1-a_{i,j})}}^{\pm})(X_{j,s}^{\pm})= 0 \ \text{ if }i \neq j, \label{Eq1.10}
\end{gather}
where $\{x,y\}=xy+yx$ and $S_{1-a_{i,j}}$ is a symmetric group of degree $1-a_{i,j}$.
\end{Definition}
One of the difficulties of Definition~\ref{Def32} is that the number of the generators is infinite. Guay-Nakajima-Wendlandt \cite{GNW} gave a presentation of the affine Yangian $Y_{\hbar,\ve}(\widehat{\mathfrak{sl}}(n))$ whose number of generators is finite.
\begin{Theorem}[Theorem 2.13 in \cite{GNW}]\label{Prop32}
Suppose that $n\geq3$. The affine Yangian $Y_{\hbar,\ve}(\widehat{\mathfrak{sl}}(n))$ is isomorphic to the associative algebra  generated by 
\begin{equation*}
\{X^\pm_{i,r},H_{i,r}\mid 0\leq i\leq n-1,r=0,1\}
\end{equation*} 
subject to the following defining relations:
\begin{gather}
[H_{i,r}, H_{j,s}] = 0,\label{Eq2.1}\\
[X_{i,0}^{+}, X_{j,0}^{-}] = \delta_{i,j} H_{i, 0},\label{Eq2.2}\\
[X_{i,1}^{+}, X_{j,0}^{-}] = \delta_{i,j} H_{i, 1} = [X_{i,0}^{+}, X_{j,1}^{-}],\label{Eq2.3}\\
[H_{i,0}, X_{j,r}^{\pm}] = \pm a_{i,j} X_{j,r}^{\pm},\label{Eq2.4}\\
[\tilde{H}_{i,1}, X_{j,0}^{\pm}] = \pm a_{i,j}\left(X_{j,1}^{\pm}\right),\text{ if }(i,j)\neq(0,n-1),(n-1,0),\label{Eq2.5}\\
[\tilde{H}_{0,1}, X_{n-1,0}^{\pm}] = \mp \left(X_{n-1,1}^{\pm}+(\ve+\dfrac{n}{2}\hbar) X_{n-1, 0}^{\pm}\right),\label{Eq2.6}\\
[\tilde{H}_{n-1,1}, X_{0,0}^{\pm}] = \mp \left(X_{0,1}^{\pm}-(\ve+\dfrac{n}{2}\hbar) X_{0, 0}^{\pm}\right),\label{Eq2.7}\\
[X_{i, 1}^{\pm}, X_{j, 0}^{\pm}] - [X_{i, 0}^{\pm}, X_{j, 1}^{\pm}] = \pm a_{ij}\dfrac{\hbar}{2} \{X_{i, 0}^{\pm}, X_{j, 0}^{\pm}\}\text{ if }(i,j)\neq(0,n-1),(n-1,0),\label{Eq2.8}\\
[X_{0, 1}^{\pm}, X_{n-1, 0}^{\pm}] - [X_{0, 0}^{\pm}, X_{n-1, 1}^{\pm}]= \mp\dfrac{\hbar}{2} \{X_{0, 0}^{\pm}, X_{n-1, 0}^{\pm}\} + (\ve+\dfrac{n}{2}\hbar) [X_{0, 0}^{\pm}, X_{n-1, 0}^{\pm}],\label{Eq2.9}\\
(\ad X_{i,0}^{\pm})^{1+|a_{i,j}|} (X_{j,0}^{\pm})= 0 \ \text{ if }i \neq j, \label{Eq2.10}
\end{gather}
where $\widetilde{H}_{i,1}=H_{i,1}-\dfrac{\hbar}{2}H_{i,0}^2$.
\end{Theorem}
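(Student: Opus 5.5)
This is the minimalistic presentation of Guay--Nakajima--Wendlandt, and I would prove it by constructing mutually inverse homomorphisms between $Y_{\hbar,\ve}(\widehat{\mathfrak{sl}}(n))$ of Definition~\ref{Def32} and the algebra $\widehat Y$ defined by the relations \eqref{Eq2.1}--\eqref{Eq2.10}. The map $\widehat Y\to Y_{\hbar,\ve}(\widehat{\mathfrak{sl}}(n))$ is the identity on the generators $X^\pm_{i,r},H_{i,r}$ with $r=0,1$. To see that it is well defined, I check that each relation \eqref{Eq2.1}--\eqref{Eq2.10} holds in $Y_{\hbar,\ve}(\widehat{\mathfrak{sl}}(n))$:
\begin{itemize}
\item \eqref{Eq2.1}--\eqref{Eq2.4}, \eqref{Eq2.8}, \eqref{Eq2.9} and \eqref{Eq2.10} are specializations of \eqref{Eq1.1}--\eqref{Eq1.10}.
\item \eqref{Eq2.5}--\eqref{Eq2.7} follow from \eqref{Eq1.5}--\eqref{Eq1.7} with $r=s=0$. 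One combines them with \eqref{Eq1.4} to rewrite $\frac{\hbar}{2}\{H_{i,0},X^\pm_{j,0}\}$ as $[\frac{\hbar}{2}H_{i,0}^2,X^\pm_{j,0}]$, which is exactly what the correction $\widetilde H_{i,1}=H_{i,1}-\frac{\hbar}{2}H_{i,0}^2$ absorbs.
\end{itemize}

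For the inverse map, I define the higher generators inside $\widehat Y$ recursively by
\begin{equation*}
X^\pm_{j,r+1}=\pm\tfrac12[\widetilde H_{j,1},X^\pm_{j,r}],\qquad H_{j,r}=[X^+_{j,r},X^-_{j,0}].
\end{equation*}
The relations \eqref{Eq1.1}--\eqref{Eq1.10} must then be verified for these elements. The first step is to show that the recursion is compatible with \eqref{Eq2.5}--\eqref{Eq2.7}. Concretely, $[\widetilde H_{i,1},X^\pm_{j,r}]=\pm a_{i,j}X^\pm_{j,r+1}$ (with the shifted version for $(i,j)=(0,n-1),(n-1,0)$), proved by induction on $r$ using $[\widetilde H_{i,1},\widetilde H_{j,1}]=0$, which is part of \eqref{Eq2.1}. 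The second step proves \eqref{Eq1.2} and \eqref{Eq1.8} by induction on $r+s$, applying $\ad\widetilde H_{k,1}$ for a suitable node $k$ to raise the degrees. The third step obtains \eqref{Eq1.5}--\eqref{Eq1.7}, and finally \eqref{Eq1.10} is deduced from \eqref{Eq2.10} by repeated application of $\ad\widetilde H_{k,1}$, as in Levendorskii's argument for finite Yangians.

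All relations involve at most two nodes, so I would reduce to rank two subalgebras generated by nodes $i,j$. For $a_{i,j}=0$ this is $Y(\mathfrak{sl}_2)\otimes Y(\mathfrak{sl}_2)$, and for $a_{i,j}=-1$ with $\{i,j\}\neq\{0,n-1\}$ it is $Y(\mathfrak{sl}_3)$; in these cases Levendorskii's theorem for finite Yangians applies verbatim. The pair $\{0,n-1\}$ differs by the extra term $(\ve+\frac n2\hbar)$. To handle it, I use the shift automorphism on the node-$0$ generators,
\begin{equation*}
X^\pm_{0,r}\mapsto\sum_{s}\binom rs a^{r-s}X^\pm_{0,s},\qquad H_{0,r}\mapsto\sum_{s}\binom rs a^{r-s}H_{0,s},
\end{equation*}
that is, $u\mapsto u+a$, with $a=\pm(\ve+\frac n2\hbar)$. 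This converts \eqref{Eq2.6}, \eqref{Eq2.7} and \eqref{Eq2.9} into the standard $\mathfrak{sl}_3$-type relations. One must check that it commutes with the recursive definitions, which holds since $\widetilde H_{0,1}\mapsto\widetilde H_{0,1}+aH_{0,0}$.

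The main obstacle is the commutativity $[H_{i,r},H_{j,s}]=0$ for all $r,s$, and especially for $i=j$. Only $r,s\le1$ is given, so the higher cases require Levendorskii's delicate induction. That induction first establishes the degree-three identity $[H_{i,2},H_{i,1}]=0$, or equivalently \eqref{Eq1.8} for $i=j$ with $(r,s)=(1,0)$, from \eqref{Eq2.10} and the $\mathfrak{sl}_2$-triple structure. I would first reproduce this low-degree case by direct computation and then propagate it by induction on degree. Once this holds, the two maps are clearly inverse on generators, which gives the isomorphism.
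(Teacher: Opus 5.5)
The paper does not prove this statement; it quotes it from \cite{GNW}, where the proof rests on Levendorskii's method, and your outline is that method. The easy direction is fine, and the recursive definition of $X^\pm_{j,r}$ and $H_{j,r}$ is the standard one. The shift of the node-$0$ generators also works, with $a=-(\ve+\frac n2\hbar)$ (so $X'^\pm_{0,1}=X^\pm_{0,1}+aX^\pm_{0,0}$). It turns \eqref{Eq2.6}, \eqref{Eq2.7} and \eqref{Eq2.9}, and likewise \eqref{Eq1.6}, \eqref{Eq1.7} and \eqref{Eq1.9} in all degrees, into the untwisted $A_2$ relations. So the pair $\{0,n-1\}$ is covered by Levendorskii's $\mathfrak{sl}_3$ theorem.

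One step, as stated, would fail, though it is easy to repair. Levendorskii's theorem needs rank at least two. For $\mathfrak{sl}_2$ the relations with $r\le 1$ do not imply $[H_{i,1},H_{i,2}]=0$, and an extra relation has to be imposed. So the algebra attached to a pair with $a_{i,j}=0$ is not $Y(\mathfrak{sl}_2)\otimes Y(\mathfrak{sl}_2)$. The one-node relations ($[H_{i,r},H_{i,s}]=0$ and \eqref{Eq1.2}, \eqref{Eq1.5}, \eqref{Eq1.8} with $i=j$) cannot be obtained from such a pair.

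They must instead be pulled back from a pair $\{i,j\}$ with $a_{i,j}=a_{j,i}=-1$. Every node of $\widehat{\mathfrak{sl}}(n)$ has such a neighbour, and this is where $n\geq 3$ is used. For pairs with $a_{i,j}=0$ you only need the cross relations. These follow directly from $[X^\pm_{i,0},X^\pm_{j,0}]=0=[X^\pm_{i,0},X^\mp_{j,0}]$ by applying $\ad\widetilde H_{i,1}$ and $\ad\widetilde H_{j,1}$ and using the Jacobi identity.

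Relatedly, your last paragraph misplaces the hard relation. The case $i=j$, $(r,s)=(1,0)$ of \eqref{Eq1.8} has degree two and is immediate. Indeed, $\ad\widetilde H_{i,1}$ maps the $(0,0)$ relation to the sum of the $(1,0)$ and $(0,1)$ relations, and these coincide because the relation is symmetric in $(r,s)$. The first one-node relation that this trick does not reach is in degree three, for example $(r,s)=(1,1)$, together with $[H_{i,1},H_{i,2}]=0$. That is exactly where a neighbouring node is needed. If you use the rank-two theorem as a black box, you do not need to redo that induction at all.
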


\section{Evaluation map for the affine Yangian}
The evaluation map for the affine Yangian is a non-trivial homomorphism from the affine Yangian $Y_{\hbar,\ve}(\widehat{\mathfrak{sl}}(n))$ to the completion of the universal enveloping algebra of the affinization of $\mathfrak{gl}(n)$. Let us set a Lie algebra 
\begin{equation*}
\widehat{\mathfrak{gl}}(n)=\mathfrak{gl}(n)\otimes\mathbb{C}[z^{\pm1}]\oplus\mathbb{C}c\oplus\mathbb{C}z
\end{equation*}
whose commutator relations are given by
\begin{gather*}
[E_{i,j}t^s,E_{k,l}t^u]=\delta_{j,k}E_{i,l}t^{s+u}-\delta_{i,l}E_{k,j}t^{s+u}+s\delta_{s+u,0}\delta_{j,k}\delta_{i,l}c+s\delta_{s+u,0}\delta_{i,j}\delta_{k,l}z,\\
\text{$z$ and $c$ are central elements of }\widehat{\mathfrak{gl}}(n),
\end{gather*}
where tr is the trace of $\mathfrak{gl}(n)$, that is, $\text{tr}(E_{i,j}E_{k,l})=\delta_{i,l}\delta_{j,k}$. 
We consider a completion of $U(\widehat{\mathfrak{gl}}(n))/U(\widehat{\mathfrak{gl}}(n))(z-1)$ following \cite{MNT} and \cite{GNW}. 
We take the grading of $U(\widehat{\mathfrak{gl}}(n))/U(\widehat{\mathfrak{gl}}(n))(z-1)$ as $\text{deg}(Xt^s)=s$ and $\text{deg}(c)=0$. We denote the degreewise completion of $U(\widehat{\mathfrak{gl}}(n))/U(\widehat{\mathfrak{gl}}(n))(z-1)$ by $\mathcal{U}(\widehat{\mathfrak{gl}}(n))$.
\begin{Theorem}[Theorem 3.8 in \cite{K1}]\label{thm:main}
Suppose that $\hbar\neq0$ and $c =\dfrac{\ve}{\hbar}$.
Then, there exists an algebra homomorphism 
\begin{equation*}
\ev_{\hbar,\ve}\colon Y_{\hbar,\ve}(\widehat{\mathfrak{sl}}(n)) \to \mathcal{U}(\widehat{\mathfrak{gl}}(n))
\end{equation*}
uniquely determined by 
\begin{gather*}
\ev_{\hbar,\ve}(X_{i,0}^{+}) = \begin{cases}
E_{n,1}t&\text{ if }i=0,\\
E_{i,i+1}&\text{ if }1\leq i\leq n-1,
\end{cases} \ev_{\hbar,\ve}^n(X_{i,0}^{-}) = \begin{cases}
E_{1,n}t^{-1}&\text{ if }i=0,\\
E_{i+1,i}&\text{ if }1\leq i\leq n-1
\end{cases}
\end{gather*}
and
\begin{align*}
\ev_{\hbar,\ve}(H_{i,1}) &=-\dfrac{i-1}{2}\hbar \ev_{\hbar,\ve}(H_{i,0}) -\hbar E_{i,i}E_{i+1,i+1} \\
&\quad+ \hbar \displaystyle\sum_{s \geq 0}  \limits\displaystyle\sum_{k=1}^{i}\limits  E_{i,k}t^{-s}E_{k,i}t^s+\hbar \displaystyle\sum_{s \geq 0} \limits\displaystyle\sum_{k=i+1}^{n}\limits  E_{i,k}t^{-s-1}E_{k,i}t^{s+1}\\
&\quad-\hbar\displaystyle\sum_{s \geq 0}\limits\displaystyle\sum_{k=1}^{i}\limits E_{i+1,k}t^{-s} E_{k,i+1}t^{s}-\hbar\displaystyle\sum_{s \geq 0}\limits\displaystyle\sum_{k=i+1}^{n} \limits E_{i+1,k}t^{-s-1} E_{k,i+1}t^{s+1}\text{ for }i\neq0,\\
\ev_{\hbar,\ve}^n(X^+_{i,1}) &=-\dfrac{i-1}{2}\hbar \ev_{\hbar,\ve}(X^+_{i,0})+ \hbar \displaystyle\sum_{s \geq 0}  \limits\displaystyle\sum_{k=1}^{i}\limits  E_{i,k}t^{-s}E_{k,i+1}t^s\\
&\quad+\hbar \displaystyle\sum_{s \geq 0} \limits\displaystyle\sum_{k=i+1}^{n}\limits  E_{i,k}t^{-s-1}E_{k,i+1}t^{s+1}\text{ for }i\neq0,\\
\ev_{\hbar,\ve}^n(X^-_{i,1}) &=-\dfrac{i-1}{2}\hbar \ev_{\hbar,\ve}(X^-_{i,0})+ \hbar \displaystyle\sum_{s \geq 0}  \limits\displaystyle\sum_{k=1}^{i}\limits  E_{i+1,k}t^{-s}E_{k,i}t^s\\
&\quad+\hbar \displaystyle\sum_{s \geq 0} \limits\displaystyle\sum_{k=i+1}^{n}\limits  E_{i+1,k}t^{-s-1}E_{k,i}t^{s+1}\text{ for }i\neq0.
\end{align*}
\end{Theorem}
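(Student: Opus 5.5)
This statement is Kodera's theorem, which the paper takes from \cite{K1}. The proof I would give relies on the minimalistic presentation of Theorem~\ref{Prop32}. Because $Y_{\hbar,\ve}(\widehat{\mathfrak{sl}}(n))$ is generated by $\{X^\pm_{i,r},H_{i,r}\mid 0\leq i\leq n-1,\ r=0,1\}$, it is enough to do two things. First, assign images in $\mathcal{U}(\widehat{\mathfrak{gl}}(n))$ to these finitely many generators. Second, check the relations \eqref{Eq2.1}--\eqref{Eq2.10}.

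Uniqueness is then immediate, since the generators determine the map. The images of $X^\pm_{i,0}$ are prescribed. I would set $H_{i,0}\mapsto E_{i,i}-E_{i+1,i+1}$ for $i\neq0$ and $H_{0,0}\mapsto E_{n,n}-E_{1,1}+c$. For the degree-one generators:
\begin{itemize}
\item for $i\neq0$, $X^\pm_{i,1}$ and $H_{i,1}$ go to the displayed formulas;
\item $H_{i,1}$ should also equal $[\ev_{\hbar,\ve}(X^+_{i,1}),E_{i+1,i}]$, so this is a consistency check;
\item $X^\pm_{0,1}$ and $H_{0,1}$ are defined analogously, using the cyclic shift of indices with $t$-shifts $E_{i,j}\mapsto E_{i+1,j+1}$ and $E_{n,1}\mapsto E_{1,2}t$ (twisted by $t$).
\end{itemize}
All infinite sums $\sum_{s\geq0}E_{a,k}t^{-s}E_{k,b}t^s$ are normally ordered. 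They converge in the degreewise completion, because in each fixed degree only finitely many terms act nontrivially on any graded quotient.

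\emph{The degree-zero relations.} Relations \eqref{Eq2.1} for $r=s=0$, \eqref{Eq2.2}, \eqref{Eq2.4} for $r=0$ and \eqref{Eq2.10} involve only $X^\pm_{i,0}$ and $H_{i,0}$. They are exactly the Chevalley--Serre relations of $\widehat{\mathfrak{sl}}(n)\subset\widehat{\mathfrak{gl}}(n)$ at level $c$.

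\emph{The relations involving degree-one generators.} For each of these I would reduce to the basic commutator
\[
\Bigl[E_{p,q},\ \sum_{s\geq0}\sum_{k}E_{a,k}t^{-s}E_{k,b}t^{s}\Bigr],
\]
and its analogue with $t$-shifted $E_{p,q}$. These collapse telescopically, leaving boundary terms: products of two zero-modes, plus central terms with a factor of $c$ that come from reordering. I would use these to check three things:
\begin{itemize}
\item \eqref{Eq2.3}, namely $[X^+_{i,1},X^-_{j,0}]=\delta_{ij}H_{i,1}=[X^+_{i,0},X^-_{j,1}]$;
\item \eqref{Eq2.4} and \eqref{Eq2.5}, i.e.\ that $\widetilde H_{i,1}$ acts on $X^\pm_{j,0}$ by $\pm a_{ij}X^\pm_{j,1}$, where the quadratic correction $\frac{\hbar}{2}H_{i,0}^2$ cancels the zero-mode boundary terms;
\item \eqref{Eq2.8}, treated in the same way.
\end{itemize}
The cyclic automorphism of the index set reduces most of the $i=0$ cases to $i\neq0$. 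The exceptions are the relations linking nodes $0$ and $n-1$, namely \eqref{Eq2.6}, \eqref{Eq2.7} and \eqref{Eq2.9}. In those the shift picks up central contributions of the form $s\,c$ from $[E t^{-s},Et^{s}]$. Summed up, these produce exactly the term $(\ve+\frac n2\hbar)X^\pm_{n-1,0}$, and this is where the condition $c=\ve/\hbar$ is used.

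\emph{Main obstacle.} I expect the hardest step to be \eqref{Eq2.1} for $r=s=1$, namely $[H_{i,1},H_{j,1}]=0$, together with the node-$0$ relations. The commutator of two infinite normally ordered quadratic sums gives cubic terms. These cancel only after a careful reindexing of the summation variables $s$, with reordering producing further central terms that must also be tracked. My first attempt would be to bypass the direct calculation. Writing $\widetilde H_{i,1}$ as the image of a Casimir-type element, I would check that $[H_{i,1},H_{j,1}]$ commutes with all $E_{k,k}$ and with the degree-zero generators. It has degree $0$ and weight $0$, and I would then show it vanishes by evaluating on a faithful family of highest-weight modules. If that argument fails, I would fall back to the explicit telescoping computation, organized by the relative position of $i$, $j$ and $k$.
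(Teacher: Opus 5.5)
Your overall route is the one the paper relies on through \cite{K1}: assign images to the generators of Theorem~\ref{Prop32} and check \eqref{Eq2.1}--\eqref{Eq2.10}. However, the step that fixes the node-$0$ images fails as written.

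\emph{The node-$0$ degree-one images are not free.} Since $n\geq3$, the pair $(1,0)$ is ordinary, so \eqref{Eq2.5} and \eqref{Eq2.3} force $\ev_{\hbar,\ve}(X^\pm_{0,1})=\mp[\ev_{\hbar,\ve}(\widetilde H_{1,1}),\ev_{\hbar,\ve}(X^\pm_{0,0})]$ and $\ev_{\hbar,\ve}(H_{0,1})=[\ev_{\hbar,\ve}(X^+_{0,1}),E_{1,n}t^{-1}]$. A direct computation from the displayed formula for $H_{1,1}$ gives
\[
\ev_{\hbar,\ve}(X^+_{0,1})=\hbar\sum_{s\geq0}\sum_{k=1}^{n}E_{n,k}t^{-s}E_{k,1}t^{s+1}+\Bigl(\hbar c+\frac{\hbar}{2}\Bigr)E_{n,1}t .
\]
The $\hbar c$ comes from the single central term of $[E_{1,n}t^{-1},E_{n,1}t]$, not from summed contributions $s\,c$.

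No pure cyclic shift reproduces this formula. The automorphism $\tau$ of $\widehat{\mathfrak{gl}}(n)$ with $\tau(X^\pm_{i,0})=X^\pm_{i+1,0}$ multiplies by $t^{-1}$ when the row index wraps, so $E_{n,1}\mapsto E_{1,2}t^{-1}$ rather than $E_{1,2}t$. It also needs the central correction $E_{n,n}\mapsto E_{1,1}-c$.
\begin{itemize}
\item Transporting $\ev_{\hbar,\ve}(X^+_{n-1,1})$ by $\tau$ gives the same sum with coefficient $-\frac{n-2}{2}\hbar$.
\item Transporting $\ev_{\hbar,\ve}(X^+_{1,1})$ by $\tau^{-1}$ gives coefficient $\hbar c$.
\end{itemize}
Either way \eqref{Eq2.5} for $(i,j)=(1,0)$ fails. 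Define the node-$0$ images by the forced formulas instead. Then \eqref{Eq2.7} becomes
\[
[\ev_{\hbar,\ve}(\widetilde H_{n-1,1}),E_{n,1}t]=\frac{n-1}{2}\hbar E_{n,1}t-\hbar\sum_{s\geq0}\sum_{k=1}^{n}E_{n,k}t^{-s}E_{k,1}t^{s+1},
\]
which matches its right-hand side exactly when $\hbar c=\ve$.

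\emph{The rotation step.} A cyclic symmetry of the index set reduces nothing by itself, because $i\mapsto i+1$ sends the ordinary pair $(n-2,n-1)$ to the special pair $(n-1,0)$. What you need is the automorphism $\rho$ of $Y_{\hbar,\ve}(\widehat{\mathfrak{sl}}(n))$ that rotates the degree-zero generators and shifts spectral parameters:
\[
X^\pm_{i,1}\mapsto X^\pm_{i+1,1}+a_iX^\pm_{i+1,0},\qquad H_{i,1}\mapsto H_{i+1,1}+a_iH_{i+1,0},
\]
with $a_i=\frac{\hbar}{2}$ for $i\neq n-1$ and $a_{n-1}=-(\ve+\frac{n-1}{2}\hbar)$. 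You must then verify $\ev_{\hbar,\ve}\circ\rho=\tau\circ\ev_{\hbar,\ve}$ on generators; the wrap-around step uses $\hbar c=\ve$. Done this way, every pair of nodes can be rotated off node $0$ because $n\geq3$. So all relations, including \eqref{Eq2.6}, \eqref{Eq2.7} and \eqref{Eq2.9}, follow from those among nodes $1,\dots,n-1$, and your list of exceptions is unnecessary.

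\emph{The shortcut for $[H_{i,1},H_{j,1}]=0$ does not work.} A degree-$0$, weight-$0$ element commuting with all degree-zero generators need not vanish. For example, $\sum_{s>0}It^{-s}It^{s}$ with $I=\sum_kE_{k,k}$ commutes with all of $\widehat{\mathfrak{sl}}(n)$. Proving such commutation would also need brackets like $[H_{i,1},X^\pm_{k,1}]$, which the presentation does not supply. Nor do you have a faithfulness result for $\mathcal{U}(\widehat{\mathfrak{gl}}(n))$ on highest-weight modules. The direct computation is required. Its bookkeeping is of the same kind as the paper's computation of $[\ev_\hbar(T^{(2)}_{i,i}),\ev_\hbar(T^{(2)}_{j,j})]$ in the proof of Theorem~\ref{Main}.
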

The proof was given by the presentation given in Theorem~\ref{Prop32} (see \cite{K1}).
\section{The finite Yangian of type $A$}
The one parameter Yangian associated with a symmetrizable Kac-Moody Lie algebra $\mathfrak{g}$ can be defined by the same way as Definition~\ref{Def32}. Here, we only recall the case that $\mathfrak{g}=\mathfrak{sl}(n)$.
\begin{Definition}\label{Def33}
Suppose that $n\geq3$. For a complex number $\hbar$, the finite Yangian $Y_{\hbar}(\mathfrak{sl}(n))$ is the associative algebra  generated by 
\begin{equation*}
\{X^\pm_{i,r},H_{i,r}\mid 1\leq i\leq n-1,r\in\mathbb{Z}_{\geq0}\}
\end{equation*}
subject to the defining relations \eqref{Eq1.1}-\eqref{Eq1.5}, \eqref{Eq1.8} and \eqref{Eq1.10}.
\end{Definition}
By Theorem 2.13 in \cite{GNW}, $Y_{\hbar}(\mathfrak{sl}(n))$ has a similar presentation to Theorem~\ref{Prop32}.
\begin{Theorem}[Theorem 2.13 in \cite{GNW}]\label{Prop33}
The finite Yangian $Y_{\hbar}(\widehat{\mathfrak{sl}}(n))$ is isomorphic to the associative algebra generated by $X_{i,r}^{+}, X_{i,r}^{-}, H_{i,r}$ $(i \in \{1,\cdots, n-1\}, r = 0,1)$ subject to the defining relations \eqref{Eq2.1}-\eqref{Eq2.5}, \eqref{Eq2.8} and \eqref{Eq2.10}.
\end{Theorem}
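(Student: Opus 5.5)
The plan is to compare two algebras: the algebra $\mathcal{Y}$ defined by the finite presentation (generators $X^\pm_{i,r},H_{i,r}$ with $r=0,1$ and relations \eqref{Eq2.1}--\eqref{Eq2.5}, \eqref{Eq2.8}, \eqref{Eq2.10}), and $Y_\hbar(\mathfrak{sl}(n))$ as in Definition~\ref{Def33}. The argument follows Levendorskii's approach, as adapted in \cite{GNW}.

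\textbf{The easy map.} First I construct a homomorphism $\psi\colon\mathcal{Y}\to Y_\hbar(\mathfrak{sl}(n))$ sending each generator to the generator of the same name. I have to check that every relation of $\mathcal{Y}$ holds in $Y_\hbar(\mathfrak{sl}(n))$.
\begin{itemize}
\item Relations \eqref{Eq2.1}--\eqref{Eq2.4} are special cases of \eqref{Eq1.1}--\eqref{Eq1.4}.
\item Relation \eqref{Eq2.8} is \eqref{Eq1.8} with $r=s=0$.
\item Relation \eqref{Eq2.10} is \eqref{Eq1.10} with all indices $0$, up to a factor $(1-a_{i,j})!$.
\item For \eqref{Eq2.5}, take \eqref{Eq1.5} with $r=s=0$. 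This gives $[H_{i,1},X^\pm_{j,0}]-[H_{i,0},X^\pm_{j,1}]=\pm a_{i,j}\frac{\hbar}{2}\{H_{i,0},X^\pm_{j,0}\}$. Rewrite $[H_{i,0},X^\pm_{j,1}]=\pm a_{i,j}X^\pm_{j,1}$ and $[\frac{\hbar}{2}H_{i,0}^2,X^\pm_{j,0}]=\pm a_{i,j}\frac{\hbar}{2}\{H_{i,0},X^\pm_{j,0}\}$. Together these yield exactly $[\widetilde H_{i,1},X^\pm_{j,0}]=\pm a_{i,j}X^\pm_{j,1}$.
\end{itemize}
So $\psi$ is well defined. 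It is surjective once I know $Y_\hbar(\mathfrak{sl}(n))$ is generated in degrees $0,1$. This holds because \eqref{Eq1.5} with $i=j$ gives $X^\pm_{i,r+1}=\pm\frac12[\widetilde H_{i,1},X^\pm_{i,r}]$, and \eqref{Eq1.2} gives $H_{i,r}=[X^+_{i,r},X^-_{i,0}]$.

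\textbf{The inverse map.} For the inverse $\phi$, I define higher elements of $\mathcal{Y}$ recursively by
\[
X^\pm_{i,r+1}=\pm\tfrac12[\widetilde H_{i,1},X^\pm_{i,r}],\qquad H_{i,r}=[X^+_{i,r},X^-_{i,0}].
\]
I then show that these satisfy \eqref{Eq1.1}--\eqref{Eq1.5}, \eqref{Eq1.8} and \eqref{Eq1.10} for all $r,s$. The proof is by induction on degree, in the following order.
\begin{enumerate}
\item Show $[\widetilde H_{i,1},X^\pm_{j,r}]=\pm a_{i,j}X^\pm_{j,r+1}$ plus explicit lower-order correction terms, for all $j$. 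For $j\neq i$, one uses \eqref{Eq2.8} to move $\widetilde H_{i,1}$ across.
\item Prove \eqref{Eq1.8} for all $r,s$ by applying $\operatorname{ad}\widetilde H_{k,1}$ to the degree-zero case \eqref{Eq2.8}, choosing $k$ with $a_{k,i}\neq a_{k,j}$ so that the shifts in $r$ and $s$ can be separated. This choice of $k$ needs $n\ge3$ when $i,j$ are adjacent.
\item Prove \eqref{Eq1.2}, i.e.\ $[X^+_{i,r},X^-_{j,s}]=\delta_{i,j}H_{i,r+s}$, by the same ad-shifting argument applied to \eqref{Eq2.3}.
\item Prove \eqref{Eq1.4} and \eqref{Eq1.5} as consequences of the previous steps.
\item Derive the general Serre relations \eqref{Eq1.10} from \eqref{Eq2.10} by repeatedly applying $\operatorname{ad}\widetilde H_{k,1}$ and symmetrizing.
\end{enumerate}
Once all relations hold, $\phi$ is a homomorphism. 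The composites $\phi\circ\psi$ and $\psi\circ\phi$ fix the generators, so they are the identity and the two algebras are isomorphic.

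\textbf{Main obstacle.} I expect the hard step to be the commutativity \eqref{Eq1.1}, $[H_{i,r},H_{j,s}]=0$, for all $r,s$. The finite presentation only gives $[H_{i,1},H_{j,1}]=0$ directly. To bootstrap, I would prove the auxiliary statement $[H_{i,2},H_{j,1}]=0$ first, by expanding $H_{i,2}$ through the recursive definition and using Jacobi together with \eqref{Eq2.8} and \eqref{Eq2.10}; this is the delicate computation in Levendorskii's argument. After that, $\operatorname{ad}H_{j,1}$ acts diagonally as a shift, and induction on $r+s$ gives $[H_{i,r},H_{j,s}]=0$ in general. The rank-one and rank-two cases are the base of this induction, and I would handle them first by direct calculation.
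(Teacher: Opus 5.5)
The paper gives no proof of this statement; it quotes \cite{GNW}, whose proof adapts Levendorskii's argument. Your architecture matches that proof: the easy map $\psi$ (which you verify correctly), the recursive definition of $X^\pm_{i,r}$ and $H_{i,r}$, and the lemma $[H_{i,2},H_{j,1}]=0$ as the crux. The inverse direction, however, has a genuine gap at the ``diagonal'' relations, meaning those in which two indices sit on the same node.

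No operator $\ad\widetilde H_{k,1}$ can separate such indices, because it shifts each of them with the same coefficient $a_{k,i}$. Let $Q_{ii}(r,s)$ denote the difference of the two sides of \eqref{Eq1.8} for $i=j$; it is symmetric in $r,s$. For every $k$ you get $[\widetilde H_{k,1},Q_{ii}(r,s)]=\pm a_{k,i}\bigl(Q_{ii}(r+1,s)+Q_{ii}(r,s+1)\bigr)$. From $Q_{ii}(0,0)=0$ this gives $Q_{ii}(1,0)=0$, but at the next level only $Q_{ii}(2,0)+Q_{ii}(1,1)=0$. So Step 2 fails for $i=j$. For $i\neq j$ it does work, with $k\in\{i,j\}$, since $4-a_{i,j}a_{j,i}\neq0$; $n\geq3$ plays no role there. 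The repeated index in the Serre relations of Step 5 has the same problem. Step 3 with $i=j$ is circular: the shift gives $[\widetilde H_{k,1},[X^+_{i,r},X^-_{i,s}]]=a_{k,i}\bigl([X^+_{i,r+1},X^-_{i,s}]-[X^+_{i,r},X^-_{i,s+1}]\bigr)$, and to conclude you need the left side to vanish. That is exactly the commutativity of Cartan elements you only address at the end. (Step 1, by contrast, is exact and trivial: \eqref{Eq2.1} gives $[\widetilde H_{i,1},\widetilde H_{j,1}]=0$, hence $[\widetilde H_{i,1},X^\pm_{j,r}]=\pm a_{i,j}X^\pm_{j,r+1}$ with no correction terms.)

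Your final paragraph does not close this gap. Given $[H_{i,2},H_{j,1}]=0$, the $\ad\widetilde H_{k,1}$-shifts show that all $[X^+_{i,r},X^-_{i,s}]$ with $r+s=3$ coincide; call the common value $H_{i,3}$. At the next level they only show that $k\mapsto[X^+_{i,4-k},X^-_{i,k}]$ is an arithmetic progression with difference $-\frac12[\widetilde H_{i,1},H_{i,3}]$. Nothing in these shifts forces $[\widetilde H_{i,1},H_{i,3}]=0$, so the induction on $r+s$ stalls.

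The missing idea is what the lemma is actually for: it lets you build a second, independent shift operator
$\widetilde H_{i,2}=H_{i,2}-\hbar H_{i,0}H_{i,1}+\frac{\hbar^2}{3}H_{i,0}^3$.
This element commutes with all $\widetilde H_{k,1}$, and one shows $[\widetilde H_{i,2},X^\pm_{j,s}]=\pm a_{i,j}X^\pm_{j,s+2}\pm\frac{\hbar^2}{12}a_{i,j}^3X^\pm_{j,s}$. With $\ad\widetilde H_{i,1}$ and $\ad\widetilde H_{i,2}$ together the diagonal indices can be separated. For example, if $Q_{ii}$ vanishes up to level $m$, the first gives $Q_{ii}(m+1-k,k)=(-1)^kq$, and the second, applied at level $m-1$, forces $q=0$. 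Likewise $[\widetilde H_{i,2},H_{i,2}]=0$ yields $[\widetilde H_{i,1},H_{i,3}]=0$ in the example above. The relations \eqref{Eq1.1}, \eqref{Eq1.2} for $i=j$, and \eqref{Eq1.5} then have to be proved in one simultaneous induction on degree, not in the linear order of your Steps 2--4.

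Finally, the rank-one base ``by direct calculation'' is not available. The degree $\le 1$ relations of a single node do not yield $[H_{i,1},H_{i,2}]=0$; this is why Levendorskii must add an extra relation for $\mathfrak{sl}(2)$. So the proof of the lemma must use a neighbour $j$ with $a_{i,j}\neq0$, and this, rather than Step 2, is where $n\geq3$ is needed.
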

For the latter proof, we prepare one associative algebra by extending $Y_{\hbar}(\widehat{\mathfrak{sl}}(n))$.
\begin{Definition}[Theorem 2.13 in \cite{GNW}]
We define $y_{\hbar}(\widehat{\mathfrak{sl}}(n))$ as the associative algebra generated by $X_{i,r}^{+}, X_{i,r}^{-}, H_{i,r}$ $(i \in \{1,\cdots, n-1\}, r = 0,1)$ and $A_{n,1}$ subject to the defining relations \eqref{Eq2.1}-\eqref{Eq2.5}, \eqref{Eq2.8}, \eqref{Eq2.10} and
\begin{gather}
[\widetilde{H}_{n-1,1},A_{n,1}]=0,\label{Eq2.15}\\
[A_{n,1},X^\pm_{n,0}]=\mp X^\pm_{n,1}.\label{Eq2.16}
\end{gather}
\end{Definition}
By \eqref{Eq2.15} and \eqref{Eq2.16}, we have
\begin{align}
[A_{n,1},X^\pm_{n-1,r}]&=\dfrac{1}{2^r}[A_{n,1},\ad(\widetilde{H}_{n-1,1})^{r}X^\pm_{n-1,0}]\nonumber\\
&=\mp\dfrac{1}{2^r}\ad(\widetilde{H}_{n-1,1})^{r}X^\pm_{n-1,1}=\mp X^\pm_{n-1,r+1}.\label{Eq2.17}
\end{align}

Since $\mathfrak{gl}(n)$ is not a finite dimensional simple Lie algebra, we can not give the finite Yangian associated with $\mathfrak{gl}(n)$ by using Definition~\ref{Def33}.
The finite Yangian associated with $\mathfrak{gl}(n)$ is defined by using the RTT presentation.
\begin{Definition}
The finite Yangian associated with $\mathfrak{gl}(n)$ is an associative algebra generated by $\{T^{(r)}_{i,j}\mid r\geq0,1\leq i,j\leq n\}$ with the defining relations:
\begin{gather*}
T^{(0)}_{i,j}=\delta_{i,j},\\
[T^{(r+1)}_{i,j},T^{(s)}_{k,l}]-[T^{(r)}_{i,j},T^{(s+1)}_{k,l}]=-\hbar (T^{(r)}_{k,j}T^{(s)}_{i,l}-T^{(s)}_{k,j}T^{(r)}_{i,l}).
\end{gather*}
We denote the finite Yangian associated with $\mathfrak{gl}(n)$ by $Y_\hbar(\mathfrak{gl}(n))$.
\end{Definition}
Let us set the following element in $Y_\hbar(\mathfrak{gl}(n))[u^{-1}]$ for $1\leq a_1,\cdots,a_l, b_1,\cdots,b_l\leq n$:
\begin{align}
t^{a_1,\cdots,a_l}_{b_1,\cdots,b_l}(u)&=\sum_{\sigma,\tau\in S_l}\limits \text{sgn}(\sigma)\text{sgn}(\tau)t_{a_{\sigma(1)},b_{\tau(1)}}(u)t_{a_{\sigma(1)},b_{\tau(1)}}(u+\hbar)\cdots t_{a_{\sigma(l)},b_{\tau(l)}}(u+(l-1)\hbar),
\end{align}
where $t_{i,j}(u)=\sum_{r\geq0}\limits t^{(r)}_{i,j}u^{-r}$. 
The embedding
\begin{align*}
\iota\colon Y_{\hbar}(\mathfrak{sl}(n))\to y_\hbar(\mathfrak{gl}(n))
\end{align*}
was given by
\begin{align*}
\iota(X^+_{i,0})&=T^{(1)}_{i,i+1},\ \iota(X^-_{i,0})=T^{(1)}_{i+1,i},\\
\iota(\widetilde{H}_{i,1})&=T^{(2)}_{i,i}-T^{(2)}_{i+1,i+1}-\dfrac{i-1}{2}\hbar(T^{(1)}_{i,i}-T^{(1)}_{i+1,i+1})\\
&\quad+\hbar\sum_{u=1}^{i-1}T^{(1)}_{i,u}T^{(1)}_{u,i}-\hbar\sum_{u=1}^{i}T^{(1)}_{i+1,u}T^{(1)}_{u,i+1}+\dfrac{\hbar}{2}((T^{(1)}_{i,i})^2-(T^{(1)}_{i+1,i+1})^2).
\end{align*}

We introduce a new associative algebra of which $Y_\hbar(\mathfrak{gl}(n))$ becomes a quotient algebra.
\begin{Definition}
We define $y_\hbar(\mathfrak{gl}(n))$ as an associative algebra generated by $\{T^{(r)}_{i,j}\mid r\geq0,1\leq i,j\leq n\}$ with the defining relations:
\begin{gather}
T^{(0)}_{i,j}=\delta_{i,j},\ [T^{(1)}_{i,j},T^{(r)}_{k,l}]=\delta_{j,k}T^{(r)}_{i,l}-\delta_{i,l}T^{(r)}_{k,j},\label{ga1}\\
[T^{(2)}_{i,i},T^{(2)}_{j,j}]=-\hbar(T^{(1)}_{j,i}T^{(2)}_{i,j}-T^{(2)}_{j,i}T^{(1)}_{i,j}),\label{ga2}\\
[T^{(2)}_{i,i},T^{(r)}_{k,l}]=\delta_{i,k}T^{(r+1)}_{i,l}-\delta_{i,l}T^{(r+1)}_{k,i}-\hbar(T^{(1)}_{k,i}T^{(r)}_{i,l}-T^{(r)}_{k,i}T^{(1)}_{i,l}).\label{ga3}
\end{gather}
\end{Definition}
Similarly to $Y_\hbar(\mathfrak{gl}(n))$, we obtain the following theorem.
\begin{Theorem}
\textup{(1)}\ There exists an embedding
\begin{align*}
\iota\colon Y_{\hbar}(\mathfrak{sl}(n))\to y_\hbar(\mathfrak{gl}(n))
\end{align*}
given by
\begin{align*}
\iota(X^+_{i,0})&=T^{(1)}_{i,i+1},\ \iota(X^-_{i,0})=T^{(1)}_{i+1,i},\\
\iota(\widetilde{H}_{i,1})&=T^{(2)}_{i,i}-T^{(2)}_{i+1,i+1}-\dfrac{i-1}{2}\hbar(T^{(1)}_{i,i}-T^{(1)}_{i+1,i+1})\\
&\quad+\hbar\sum_{u=1}^{i-1}T^{(1)}_{i,u}T^{(1)}_{u,i}-\hbar\sum_{u=1}^{i}T^{(1)}_{i+1,u}T^{(1)}_{u,i+1}+\dfrac{\hbar}{2}((T^{(1)}_{i,i})^2-(T^{(1)}_{i+1,i+1})^2).
\end{align*}
\textup{(2)}\ There exists a homomorphism
\begin{align*}
\widehat{\iota}\colon y_{\hbar}(\mathfrak{sl}(n))\to y_\hbar(\mathfrak{gl}(n))
\end{align*}
given by
\begin{align*}
\widehat{\iota}(X^+_{i,0})&=T^{(1)}_{i,i+1},\ \widehat{\iota}(X^-_{i,0})=T^{(1)}_{i+1,i},\\
\widehat{\iota}(\widetilde{H}_{i,1})&=T^{(2)}_{i,i}-T^{(2)}_{i+1,i+1}-\dfrac{i-1}{2}\hbar(T^{(1)}_{i,i}-T^{(1)}_{i+1,i+1})\\
&\quad+\hbar\sum_{u=1}^{i-1}T^{(1)}_{i,u}T^{(1)}_{u,i}-\hbar\sum_{u=1}^{i}T^{(1)}_{i+1,u}T^{(1)}_{u,i+1}+\dfrac{\hbar}{2}((T^{(1)}_{i,i})^2-(T^{(1)}_{i+1,i+1})^2),\\
\widehat{\iota}(A_{n,1})&=T^{(2)}_{n,n}-\dfrac{n-2}{2}T^{(1)}_{n,n}+\hbar\sum_{u=1}^{n-1}T^{(1)}_{n,u}T^{(1)}_{u,n}+\dfrac{\hbar}{2}(T^{(1)}_{n,n})^2.
\end{align*}
\end{Theorem}
\begin{proof}
The proof is given by a direct computation. We only show the compatibility with \eqref{Eq2.15} and \eqref{Eq2.16}. The compatibilities of other relations can be proven by a similar way. By \eqref{ga1} and \eqref{ga2}, we have
\begin{align*}
&\quad[\widehat{\iota}(\widetilde{H}_{n-1,1}),\widehat{\iota}(A_{n,1})]\\
&=[T^{(2)}_{n-1,n-1},T^{(2)}_{n,n}]+[T^{(2)}_{n-1,n-1},\hbar\sum_{u=1}^{n-1}T^{(1)}_{n,u}T^{(1)}_{u,n}]\\
&\quad+[\hbar\sum_{u=1}^{n-2}T^{(1)}_{n-1,u}T^{(1)}_{u,n-1},T^{(2)}_{n,n}]+[\hbar\sum_{u=1}^{n-2}T^{(1)}_{n-1,u}T^{(1)}_{u,n-1},\hbar\sum_{u=1}^{n-1}T^{(1)}_{n,u}T^{(1)}_{u,n}]\\
&=-\hbar(T^{(1)}_{n,n-1}T^{(2)}_{n-1,n}-T^{(2)}_{n,n-1}T^{(1)}_{n-1,n})\\
&\quad-(\hbar T^{(2)}_{n,n-1}T^{(1)}_{n-1,n}-\hbar T^{(1)}_{n,n-1}T^{(2)}_{n-1,n})\\
&\quad+0-0\\
&=0.
\end{align*}
Thus, we have proved the compatibility with \eqref{Eq2.15}.

As for the compatibility with \eqref{Eq2.16}, we only show the plus case. By a direct computation, we obtain
\begin{align*}
&\quad[\widehat{\iota}(A_{n,1}),\widehat{\iota}(X^+_{n-1,0})]\\
&=[T^{(2)}_{n,n},T^{(1)}_{n-1,n}]-\dfrac{n-2}{2}[T^{(1)}_{n,n},T^{(1)}_{n-1,n}]+\hbar\sum_{u=1}^{n-1}[T^{(1)}_{n,u}T^{(1)}_{u,n},T^{(1)}_{n-1,n}]+\dfrac{\hbar}{2}[(T^{(1)}_{n,n})^2,T^{(1)}_{n-1,n}].\\
&=-T^{(2)}_{n-1,n}+\dfrac{n-1}{2}T^{(1)}_{n-1,n}-\hbar\sum_{u=1}^{n-1}T^{(1)}_{n-1,u}T^{(1)}_{u,n}+T^{(1)}_{n,n}T^{(1)}_{n-1,n}+\dfrac{\hbar}{2}\{T^{(1)}_{n,n},T^{(1)}_{n-1,n}\}\\
&=-T^{(2)}_{n-1,n}+\dfrac{n-1}{2}T^{(1)}_{n-1,n}-\hbar\sum_{u=1}^{n-1}T^{(1)}_{n-1,u}T^{(1)}_{u,n}.
\end{align*}
Thus, we have proved the compatibility with \eqref{Eq2.16} for the plus case.
\end{proof}

Let us set the following element in $y_\hbar(\mathfrak{gl}(n))[u^{-1}]$ for $1\leq a_1,\cdots,a_l,,b_1,\cdots,b_l\leq n$:
\begin{align}
t^{a_1,\cdots,a_l}_{b_1,\cdots,b_l}(u)&=\sum_{\sigma,\tau\in S_l}\limits \text{sgn}(\sigma)\text{sgn}(\tau)t_{a_{\sigma(1)},b_{\tau(1)}}(u)t_{a_{\sigma(1)},b_{\tau(1)}}(u+\hbar)\cdots t_{a_{\sigma(l)},b_{\tau(l)}}(u+(l-1)\hbar),\label{def31}
\end{align}
where $t_{i,j}(u)=\sum_{r\geq0}\limits t^{(r)}_{i,j}u^{-r}$. By the definition \eqref{def31}, we have
\begin{equation}
t^{a_1,\cdots,a_l}_{b_1,\cdots,b_l}(u)=0\label{al103-1}
\end{equation}
if $a_i=a_j$ or $b_i=b_j$ for some $i\neq j$.

Here after, we sometimes denote $\sum_{\text{condition}A}$ by $\sum$ in order to simplify the notation.
\begin{Lemma}
The following relations hold in the case that $a_1,\cdots,a_k$ and $b_1,\cdots,b_k$ are all different:
\begin{align}
&\quad[T^{(1)}_{i,j},t^{a_1,\cdots,a_k}_{b_1,\cdots,b_k}(v)]\nonumber\\
&=\sum_{u=1}^k\limits \delta_{a_u,j}t^{a_1,\cdots,i,\cdots,a_k}_{b_1,\cdots,b_k}(v)-\sum_{u=1}^k\limits t^{a_1,\cdots,a_k}_{b_1,\cdots,j,\cdots,b_k}(v)\delta_{i,b_u},\label{al100}\\
&\quad[T^{(2)}_{i,i},t^{a_1,\cdots,a_k}_{b_1,\cdots,b_k}(v)]-v[T^{(1)}_{i,i},t^{a_1,\cdots,a_k}_{b_1,\cdots,b_k}(v)]\nonumber\\
&=-\hbar\sum_{u=1}^k\limits T^{(1)}_{a_u,i}t^{a_1,\cdots,i,\cdots,a_k}_{b_1,\cdots,b_k}(v)+\hbar\sum_{u=1}^k\limits t^{a_1,\cdots,a_k}_{b_1,\cdots,i,\cdots,b_k}(v)T^{(1)}_{i,b_u}\nonumber\\
&\quad+\hbar t^{a_1,\cdots,a_k}_{b_1,\cdots,b_k}-\hbar\sum_{u,v=1}^k\limits \delta_{a_u,b_v}t^{a_1,\cdots,i,\cdots,a_k}_{b_1,\cdots,i,\cdots,b_k}\nonumber\\
\nonumber\\
&\quad-\hbar\sum_{\sigma,\tau\in S_l}\limits\sum_j (l+1-2j)\text{sgn}(\sigma)\text{sgn}(\tau)t_{a_{\sigma(1)},b_{\tau(1)}}(u)\nonumber\\
&\qquad\qquad\qquad\cdots (\delta_{a_{\sigma(j)},i}t_{i,b_{\tau(j)}}(u+(j-1)\hbar))\cdots t_{a_{\sigma(l)},b_{\tau(l)}}(u+(l-1)\hbar),\label{al101}\\
&\quad[T^{(2)}_{i,i},t^{a_1,\cdots,a_k}_{b_1,\cdots,b_k}(v)]-v[T^{(1)}_{i,i},t^{a_1,\cdots,a_k}_{b_1,\cdots,b_k}(v)]\nonumber\\
&=-\hbar\sum_{u=1}^k\limits t^{a_1,\cdots,i,\cdots,a_k}_{b_1,\cdots,b_k}(v)T^{(1)}_{a_u,i}+\hbar\sum_{u=1}^k\limits T^{(1)}_{i,b_u}t^{a_1,\cdots,a_k}_{b_1,\cdots,i,\cdots,b_k}(v)\nonumber\\
&\quad-\hbar\sum_{\sigma,\tau\in S_l}\limits\sum_j (l+1-2j)\text{sgn}(\sigma)\text{sgn}(\tau)t_{a_{\sigma(1)},b_{\tau(1)}}(u)\nonumber\\
&\qquad\qquad\qquad\cdots (\delta_{b_{\tau(j)},i}t_{a_{\sigma(j)},i}(u+(j-1)\hbar))\cdots t_{a_{\sigma(l)},b_{\tau(l)}}(u+(l-1)\hbar).\label{al103}
\end{align}
\end{Lemma}
\begin{proof}
The relation \eqref{al100} follows from the definition \eqref{def31}. We only show the relation \eqref{al101}. The relation \eqref{al103} can be proven by a direct computation. Here after, we denote the $i$-th term of the equation $(\text{equation number})$ by $(\text{equation number})_i$. By the definition, we have
\begin{align}
&\quad[T^{(2)}_{i,i},t^{a_1,\cdots,a_l}_{b_1,\cdots,b_l}(v)]-v[T^{(1)}_{i,i},t^{a_1,\cdots,a_l}_{b_1,\cdots,b_l}(v)]\nonumber\\
&=-\hbar\sum_{\sigma,\tau\in S_l}\limits \text{sgn}(\sigma)\text{sgn}(\tau)t_{a_{\sigma(1)},b_{\tau(1)}}(u)\nonumber\\
&\qquad\qquad\cdots (T^{(1)}_{a_{\sigma(j)},i}t_{i,b_{\tau(j)}}(u+(j-1)\hbar)-t_{a_{\sigma(j)},i}(u+(j-1)\hbar)T^{(1)}_{i,b_{\tau(j)}})\cdots t_{a_{\sigma(l)},b_{\tau(l)}}(u+(l-1)\hbar)\nonumber\\
&\quad+(j-1)\hbar\sum_{\sigma,\tau\in S_l}\limits \text{sgn}(\sigma)\text{sgn}(\tau)t_{a_{\sigma(1)},b_{\tau(1)}}(u)\nonumber\\
&\qquad\qquad\cdots (\delta_{a_{\sigma(j)},i}t_{i,b_{\tau(j)}}(u+(j-1)\hbar)-\delta_{b_{\tau(j)},i}t_{a_{\sigma(j)},i}(u+(j-1)\hbar))\cdots t_{a_{\sigma(l)},b_{\tau(l)}}(u+(l-1)\hbar)\nonumber\\
&=-\hbar\sum_{\sigma,\tau\in S_l}\limits \sum_j\text{sgn}(\sigma)\text{sgn}(\tau)t_{a_{\sigma(1)},b_{\tau(1)}}(u)\nonumber\\
&\qquad\qquad\qquad\cdots (T^{(1)}_{a_{\sigma(j)},i}t_{i,b_{\tau(j)}}(u+(j-1)\hbar))\cdots t_{a_{\sigma(l)},b_{\tau(l)}}(u+(l-1)\hbar)\nonumber\\
&\quad+\hbar\sum_{\sigma,\tau\in S_l}\limits\sum_j \text{sgn}(\sigma)\text{sgn}(\tau)t_{a_{\sigma(1)},b_{\tau(1)}}(u)\nonumber\\
&\qquad\qquad\qquad\cdots (t_{a_{\sigma(j)},i}(u+(j-1)\hbar)T^{(1)}_{i,b_{\tau(j)}}\cdots t_{a_{\sigma(l)},b_{\tau(l)}}(u+(l-1)\hbar)\nonumber\\
&\quad+\hbar\sum_{\sigma,\tau\in S_l}\limits \text{sgn}(\sigma)\text{sgn}(\tau)(j-1)t_{a_{\sigma(1)},b_{\tau(1)}}(u)\nonumber\\
&\qquad\qquad\qquad\cdots (\delta_{a_{\sigma(j)},i}t_{i,b_{\tau(j)}}(u+(j-1)\hbar))\cdots t_{a_{\sigma(l)},b_{\tau(l)}}(u+(l-1)\hbar)\nonumber\\
&\quad-\hbar\sum_{\sigma,\tau\in S_l}\limits \text{sgn}(\sigma)\text{sgn}(\tau)(j-1)t_{a_{\sigma(1)},b_{\tau(1)}}(u)\nonumber\\
&\qquad\qquad\qquad\cdots (\delta_{b_{\tau(j)},i}t_{a_{\sigma(j)},i}(u+(j-1)\hbar))\cdots t_{a_{\sigma(l)},b_{\tau(l)}}(u+(l-1)\hbar).\label{al200}
\end{align}
By a direct computation, we obtain
\begin{align}
\eqref{al200}_1&=-\hbar\sum_{\sigma,\tau\in S_l}\limits \sum_j\text{sgn}(\sigma)T^{(1)}_{a_{\sigma(j)},i}\text{sgn}(\tau)t_{a_{\sigma(1)},b_{\tau(1)}}(u)\nonumber\\
&\qquad\qquad\qquad\cdots \cdots (t_{i,b_{\tau(j)}}(u+(j-1)\hbar))\cdots t_{a_{\sigma(l)},b_{\tau(l)}}(u+(l-1)\hbar)\nonumber\\
&\quad-\hbar\sum_{\sigma,\tau\in S_l}\limits \sum_{x<j}\text{sgn}(\sigma)\text{sgn}(\tau)t_{a_{\sigma(1)},b_{\tau(1)}}(u)\cdots [t_{a_{\sigma(x)},b_{\tau(x)}}(u+(j-1)\hbar),T^{(1)}_{a_{\sigma(j)},i}]\nonumber\\
&\qquad\qquad\qquad\cdots t_{i,b_{\tau(j)}}(u+(j-1)\hbar))\cdots t_{a_{\sigma(l)},b_{\tau(l)}}(u+(l-1)\hbar)\nonumber\\
&=-\hbar\sum_{\sigma,\tau\in S_l}\limits \sum_j\text{sgn}(\sigma)T^{(1)}_{a_{\sigma(j)},i}\text{sgn}(\tau)t_{a_{\sigma(1)},b_{\tau(1)}}(u)\nonumber\\
&\qquad\qquad\qquad\cdots (t_{i,b_{\tau(j)}}(u+(j-1)\hbar)\cdots t_{a_{\sigma(l)},b_{\tau(l)}}(u+(l-1)\hbar)\nonumber\\
&\quad-\hbar\sum_{\sigma,\tau\in S_l}\limits \sum_{x<j}\text{sgn}(\sigma)\text{sgn}(\tau)t_{a_{\sigma(1)},b_{\tau(1)}}(u)\cdots (\delta_{b_{\tau(x)},a_{\sigma(j)}}t_{a_{\sigma(x)},i}(u+(x-1)\hbar))\nonumber\\
&\qquad\qquad\qquad\cdots t_{i,b_{\tau(j)}}(u+(j-1)\hbar)\cdots t_{a_{\sigma(l)},b_{\tau(l)}}(u+(l-1)\hbar)\nonumber\\
&\quad+\hbar\sum_{\sigma,\tau\in S_l}\limits \sum_{x<j}\text{sgn}(\sigma)\text{sgn}(\tau)t_{a_{\sigma(1)},b_{\tau(1)}}(u)\cdots(\delta_{a_{\sigma(x)},i}t_{a_{\sigma(j)},b_{\tau(x)}}(u+(x-1)\hbar))\nonumber\\
&\qquad\qquad\qquad\cdots t_{i,b_{\tau(j)}}(u+(j-1)\hbar)\cdots t_{a_{\sigma(l)},b_{\tau(l)}}(u+(l-1)\hbar)\nonumber\\
&=-\hbar\sum_{u=1}^k\limits T^{(1)}_{a_u,i}t^{a_1,\cdots,i,\cdots,a_k}_{b_1,\cdots,b_k}(v)\nonumber\\
&\quad-\hbar\sum_{\sigma,\tau\in S_l}\limits \sum_{x<j}\text{sgn}(\sigma)\text{sgn}(\tau)t_{a_{\sigma(1)},b_{\tau(1)}}(u)\cdots(\delta_{b_{\tau(x)},a_{\sigma(j)}}t_{a_{\sigma(x)},i}(u+(x-1)\hbar))\nonumber\\
&\qquad\qquad\qquad\cdots t_{i,b_{\tau(j)}}(u+(j-1)\hbar)\cdots t_{a_{\sigma(l)},b_{\tau(l)}}(u+(l-1)\hbar)\nonumber\\
&\quad-\hbar\sum_{\sigma,\tau\in S_l}\limits\sum_j (j-1)\text{sgn}(\sigma)\text{sgn}(\tau)t_{a_{\sigma(1)},b_{\tau(1)}}(u)\nonumber\\
&\qquad\qquad\qquad\cdots (\delta_{b_{\tau(j)},i}t_{a_{\sigma(j)},i}(u+(j-1)\hbar))\cdots t_{a_{\sigma(l)},b_{\tau(l)}}(u+(l-1)\hbar).\label{al400}\\
\eqref{al200}_2&=\hbar\sum_{\sigma,\tau\in S_l}\limits\sum_j \text{sgn}(\sigma)\text{sgn}(\tau)t_{a_{\sigma(1)},b_{\tau(1)}}(u)\nonumber\\
&\qquad\qquad\qquad\cdots t_{a_{\sigma(j)},i}(u+(j-1)\hbar)\cdots t_{a_{\sigma(l)},b_{\tau(l)}}(u+(l-1)\hbar)T^{(1)}_{i,b_{\tau(j)}}\nonumber\\
&\quad+\hbar\sum_{\sigma,\tau\in S_l}\limits\sum_{j<x} \text{sgn}(\sigma)\text{sgn}(\tau)t_{a_{\sigma(1)},b_{\tau(1)}}(u)\cdots (t_{a_{\sigma(j)},i}(u+(j-1)\hbar))\nonumber\\
&\qquad\qquad\qquad\cdots [T^{(1)}_{i,b_{\tau(j)}},t_{a_{\sigma(x)},b_{\sigma(x)}}(u+(x-1)\hbar)]\cdots t_{a_{\sigma(l)},b_{\tau(l)}}(u+(l-1)\hbar)\nonumber\\
&=\hbar\sum_{\sigma,\tau\in S_l}\limits\sum_j \text{sgn}(\sigma)\text{sgn}(\tau)t_{a_{\sigma(1)},b_{\tau(1)}}(u)\nonumber\\
&\qquad\qquad\qquad\cdots (t_{a_{\sigma(j)},i}(u+(j-1)\hbar))\cdots t_{a_{\sigma(l)},b_{\tau(l)}}(u+(l-1)\hbar)T^{(1)}_{i,b_{\tau(j)}}\nonumber\\
&\quad+\hbar\sum_{\sigma,\tau\in S_l}\limits\sum_{j<x} \text{sgn}(\sigma)\text{sgn}(\tau)t_{a_{\sigma(1)},b_{\tau(1)}}(u)\cdots (t_{a_{\sigma(j)},i}(u+(j-1)\hbar)\nonumber\\
&\qquad\qquad\qquad\cdots (\delta_{a_{\sigma(x)},b_{\tau}(j)}t_{i,\sigma(x)}(u+(x-1)\hbar))\cdots t_{a_{\sigma(l)},b_{\tau(l)}}(u+(l-1)\hbar)\nonumber\\
&\quad-\hbar\sum_{\sigma,\tau\in S_l}\limits\sum_{j<x} \text{sgn}(\sigma)\text{sgn}(\tau)t_{a_{\sigma(1)},b_{\tau(1)}}(u)\cdots (\delta_{b_{\sigma(x)},i}t_{a_{\sigma(x)},b_{\tau(j)}}(u+(j-1)\hbar))\nonumber\\
&\qquad\qquad\qquad\cdots (t_{a_{\sigma(j)},i}(u+(j-1)\hbar))\cdots t_{a_{\sigma(l)},b_{\tau(l)}}(u+(l-1)\hbar)\nonumber\\
&=\hbar\sum_{u=1}^k\limits t^{a_1,\cdots,a_k}_{b_1,\cdots,j,\cdots,b_k}(v)T^{(1)}_{i,b_u}\nonumber\\
&\quad+\hbar\sum_{\sigma,\tau\in S_l}\limits\sum_{j<x} \text{sgn}(\sigma)\text{sgn}(\tau)t_{a_{\sigma(1)},b_{\tau(1)}}(u)\cdots (t_{a_{\sigma(j)},i}(u+(j-1)\hbar)\nonumber\\
&\qquad\qquad\qquad\cdots (\delta_{a_{\sigma(x)},b_{\tau}(j)}t_{i,\sigma(x)}(u+(x-1)\hbar))\cdots t_{a_{\sigma(l)},b_{\tau(l)}}(u+(l-1)\hbar)\nonumber\\
&\quad+\hbar\sum_{\sigma,\tau\in S_l}\limits\sum_j (l-j)\text{sgn}(\sigma)\text{sgn}(\tau)t_{a_{\sigma(1)},b_{\tau(1)}}(u)\nonumber\\
&\qquad\qquad\qquad\cdots (\delta_{b_{\tau(j)},i}t_{a_{\sigma(j)},i}(u+(j-1)\hbar))\cdots t_{a_{\sigma(l)},b_{\tau(l)}}(u+(l-1)\hbar).\label{al401}
\end{align}
Since $\eqref{al200}_3+\eqref{al401}_3=0$ and $\eqref{al400}_2+\eqref{al401}_3=0$ hold, we obtain the relation \eqref{al101}.
\end{proof}
\begin{Theorem}
For $1\leq i\leq n-1$, we have
\begin{gather}
\iota(x^+_i(u))=t^{1,\cdots,i}_{1,\cdots,i}(u+(i-1)\dfrac{\hbar}{2})^{-1}t_{1,\cdots,i-1,i+1}^{1,\cdots,i}(u+\dfrac{\hbar}{2}(i-1)),\label{ee1}\\
\iota(x^-_i(u))=t^{1,\cdots,i-1,i+1}_{1,\cdots,i}(u+(i-1)\dfrac{\hbar}{2})t^{1,\cdots,i}_{1,\cdots,i}(u+(i-1)\dfrac{\hbar}{2})^{-1},\label{ee2}
\end{gather}
where $x^+_1(u)=\sum_{r\geq0}\limits x^+_{1,r}u^{-r-1}$ and $t_{i,j}(u)=\delta_{i,j}+\sum_{r\geq0}\limits T^{(r)}_{i,j}u^{-r-1}$.
\end{Theorem}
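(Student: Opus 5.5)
We prove \eqref{ee1}; the proof of \eqref{ee2} is the same argument with the roles of upper and lower indices exchanged, using \eqref{al103} in place of \eqref{al101}. The idea is to show that the right-hand side $\mathcal{X}^+_i(u)$ of \eqref{ee1} satisfies two properties which determine the series $\iota(x^+_i(u))$ uniquely:
\begin{itemize}
\item its coefficient of $u^{-1}$ equals $\iota(X^+_{i,0})=T^{(1)}_{i,i+1}$;
\item it obeys the recursion coming from $[\widetilde{H}_{i,1},X^+_{i,r}]=2X^+_{i,r+1}$.
\end{itemize}
In generating-series form the recursion reads
\begin{equation*}
[\iota(\widetilde{H}_{i,1}),\mathcal{X}^+_i(u)]=2u\,\mathcal{X}^+_i(u)-2T^{(1)}_{i,i+1}.
\end{equation*}
Since every $X^+_{i,r}$ is obtained from $X^+_{i,0}$ by repeatedly applying $\tfrac12\ad(\widetilde{H}_{i,1})$, these two properties force $\iota(X^+_{i,r})$ to be the coefficient of $u^{-r-1}$ in $\mathcal{X}^+_i(u)$.

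\textbf{Leading coefficient.} Expand the quantum minors to first order in $u^{-1}$. The minor $t^{1,\dots,i}_{1,\dots,i}(u)$ is $1+O(u^{-1})$. In the minor $t^{1,\dots,i}_{1,\dots,i-1,i+1}(u)$, only the term replacing the column $i$ by $i+1$ survives at order $u^{-1}$, so this minor is $T^{(1)}_{i,i+1}u^{-1}+O(u^{-2})$. This gives the first property immediately.

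\textbf{Commutators with $\iota(\widetilde{H}_{i,1})$.} Compute the commutator of $\iota(\widetilde{H}_{i,1})$ with each of the two minors separately.
\begin{itemize}
\item The quadratic $T^{(1)}$-parts of $\iota(\widetilde{H}_{i,1})$ are handled by \eqref{al100}, which says that $T^{(1)}$ acts on minors by permuting indices.
\item The $T^{(2)}_{i,i}-T^{(2)}_{i+1,i+1}$ part is handled by \eqref{al101}. It yields $v[T^{(1)}_{jj},\cdot]$, a multiple of the minor itself, and correction terms. By \eqref{al103-1}, the corrections either vanish (repeated indices) or produce minors in which one index is replaced by $i$ or $i+1$.
\end{itemize}
For the principal minor $t^{1,\dots,i}_{1,\dots,i}$, the index sets are stable under $\mathfrak{gl}(i)\oplus\mathfrak{gl}(n-i)$. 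Hence the total commutator should reduce to terms of the form $\hbar\, T^{(1)}_{a,i+1}t^{\dots}_{\dots}$. These must cancel against the terms $-\hbar\sum_{u\le i}T^{(1)}_{i+1,u}T^{(1)}_{u,i+1}$ in $\iota(\widetilde{H}_{i,1})$, and the shift $(i-1)\hbar/2$ in the argument is exactly what absorbs the constants $-\tfrac{i-1}{2}\hbar$ and the $(l+1-2j)$-weighted sums. For the other minor, the same computation gives $2u\,t^{1,\dots,i}_{1,\dots,i-1,i+1}$ plus a multiple of the principal minor, and the latter produces the constant $-2T^{(1)}_{i,i+1}$ after multiplication by the inverse. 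Finally, combine the two via $[H,A^{-1}B]=-A^{-1}[H,A]A^{-1}B+A^{-1}[H,B]$.

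The main obstacle is the bookkeeping in this last step. One has to show that the $(l+1-2j)$-weighted sums in \eqref{al101} and \eqref{al103}, which are not themselves quantum minors, either combine into minors or cancel between the two terms of the commutator of $A^{-1}B$. The plan is to exploit the antisymmetry in $\sigma,\tau$: reindex by a transposition that swaps positions $j$ and $l+1-j$. If that fails, the fallback is to verify the identity at low order in $u^{-1}$ and argue by the recursion.
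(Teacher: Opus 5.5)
Your overall frame is the same as the paper's: match the $u^{-1}$ coefficient, then verify a first-order recursion in $u$ for $\mathcal{X}^+_i(u)$ coming from a degree-one Cartan element. The gap is your choice of $\widetilde H_{i,1}$. It breaks the computation at exactly the point you flag, and the proposed repair does not work.

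Write $A=t^{1,\dots,i}_{1,\dots,i}$ and $B=t^{1,\dots,i}_{1,\dots,i-1,i+1}$ at the shifted argument. The target algebra is $y_\hbar(\mathfrak{gl}(n))$, which has only the relations \eqref{ga1}--\eqref{ga3}, not the full RTT relation. In $[\iota(\widetilde H_{i,1}),A]$ the $T^{(1)}$-parts and the $T^{(2)}_{i+1,i+1}$-part cancel as you say. The $T^{(2)}_{i,i}$-part does not: $k=i$ is both a row and a column index of $A$, so in either version \eqref{al101} or \eqref{al103} a $(l+1-2j)$-weighted sum with nonvanishing Kronecker deltas survives.

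Already for $i=2$, the contribution of $T^{(2)}_{2,2}$, which nothing else in $\iota(\widetilde H_{2,1})$ cancels, is up to a scalar
\[
\sum_{\sigma\in S_2}\text{sgn}(\sigma)\bigl(t_{\sigma(1),1}(v)t_{\sigma(2),2}(v+\hbar)+t_{\sigma(1),2}(v)t_{\sigma(2),1}(v+\hbar)\bigr).
\]
This is a row-antisymmetrized but column-symmetrized product. In $Y_\hbar(\mathfrak{gl}(n))$ it vanishes by the RTT relation specialized at spectral parameters differing by $\hbar$; this is the fact that a product antisymmetrized in its rows alone is already antisymmetric in its columns. Nothing in \eqref{ga1}--\eqref{ga3}, which only prescribe commutators with $T^{(1)}_{a,b}$ and $T^{(2)}_{k,k}$, yields this directly.

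Your reindexing $j\mapsto l+1-j$ cannot supply the missing input. It reverses the order of non-commuting factors and moves each index pair to a different spectral parameter $v+(j-1)\hbar$, so it does not send the sum to its negative. The fallback (check low orders, then ``argue by the recursion'') is circular, because the all-order recursion for $\mathcal{X}^+_i(u)$ is exactly what must be proved. The term $-A^{-1}[\iota(\widetilde H_{i,1}),A]A^{-1}B$ is therefore left uncontrolled.

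The missing idea is to choose the Cartan element so that this problem never arises. For $i\le n-2$ the paper uses $\widetilde H_{i+1,1}$, with $[\widetilde H_{i+1,1},X^+_{i,r}]=-X^+_{i,r+1}$. For $i=n-1$, where no $\widetilde H_{n,1}$ exists, it uses the extra generator $A_{n,1}$ of $y_\hbar(\mathfrak{sl}(n))$. This generator is mapped by $\widehat\iota$ to $T^{(2)}_{n,n}+\cdots$ and satisfies \eqref{Eq2.17}; handling $i=n-1$ this way is why the auxiliary algebra is introduced at all.

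With this choice the $T^{(2)}$-parts are $T^{(2)}_{k,k}$ with $k\ge i+1$. Such a $k$ is never a row index of either minor, and it is a column index only of $B$. So one can always use the version of the lemma whose weighted sum vanishes identically. The operator then commutes with $A$ by \eqref{al100}, \eqref{al101} and \eqref{al103-1}, so $[H,A^{-1}B]=A^{-1}[H,B]$, and one is reduced to \eqref{ga112} and \eqref{ga111}.

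Your argument would go through in $Y_\hbar(\mathfrak{gl}(n))$ itself, where every $\iota(\widetilde H_{j,1})$ lies in the commutative Gelfand--Tsetlin subalgebra. The theorem, however, concerns $\iota$ into $y_\hbar(\mathfrak{gl}(n))$.
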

\begin{proof}
The well-definedness of $\iota$ follows from Theorem~\ref{Prop33} and a direct computation. We only show the relation \eqref{ee1}. As for the case $1\leq i\leq n-2$, it is enough to show the following relations:
\begin{gather}
[\iota(\widetilde{H}_{i+1,1}),\iota(x^+_i(u))]=-u\iota(x^+_i(u))+\hbar t^{1,\cdots,i}_{1,\cdots,i}(u+(i-1)\dfrac{\hbar}{2})T^{(1)}_{i,i+1},\label{ga112}\\
[\widehat{\iota}(A_{n,1}),\widehat{\iota}(x^+_{n-1}(u))]=u\widehat{\iota}(x^+_{n-1}(u))-\hbar t^{1,\cdots,n-1}_{1,\cdots,n-1}(u+(i-1)\dfrac{\hbar}{2})T^{(1)}_{n-1,n}.\label{ga111}
\end{gather}
The both relations can be derived by a direct computation. We only show \eqref{ga112}. The relation \eqref{ga111} can be proven by a similar way. Since
\begin{align*}
\iota(\widetilde{H}_{i+1,1})&=T^{(2)}_{i+1,i+1}-T^{(2)}_{i+2,i+2}+\dfrac{i+2}{2}\hbar (T^{(1)}_{i+1,i+1}-T^{(1)}_{i+2,i+2})\\
&\quad+\hbar\sum_{v=1}^{i}T^{(1)}_{v,i+1}T^{(1)}_{i+1,v}-\hbar\sum_{v=1}^{i+1}T^{(1)}_{v,i+2}T^{(1)}_{i+2,v}+\dfrac{\hbar}{2}((T^{(1)}_{i+1,i+1})^2-(T^{(1)}_{i+2,i+2})^2).
\end{align*}
holds, we have
\begin{align*}
&\quad[\iota(\widetilde{H}_{i+1,1}),t^{1,\cdots,i}_{1,\cdots,i}(u+(i-1)\dfrac{\hbar}{2})]\\
&=[T^{(2)}_{i+1,i+1}+\hbar\sum_{v\leq i} T^{(1)}_{v,i+1}T^{(1)}_{i+1,v},t^{1,\cdots,i}_{1,\cdots,i}(u+(i-1)\dfrac{\hbar}{2})]\\
&\quad-[T^{(2)}_{i+2,i+2}+\hbar\sum_{v\leq i+1} T^{(1)}_{v,i+2}T^{(1)}_{i+2,v},t^{1,\cdots,i}_{1,\cdots,i}(-u+(i-1)\dfrac{\hbar}{2}),t^{1,\cdots,i}_{1,\cdots,i}(u+(i-1)\dfrac{\hbar}{2})]\\
&\quad+\dfrac{i+2}{2}\hbar[(T^{(1)}_{i+1,i+1}-T^{(1)}_{i+2,i+2}),t^{1,\cdots,i}_{1,\cdots,i}(u+(i-1)\dfrac{\hbar}{2})]\\
&\quad+[\dfrac{\hbar}{2}((T^{(1)}_{i+1,i+1})^2-(T^{(1)}_{i+2,i+2})^2),t^{1,\cdots,i}_{1,\cdots,i}(u+(i-1)\dfrac{\hbar}{2})].
\end{align*}
Since all of the terms are equal to zero by \eqref{al100} and \eqref{al101}, we have
\begin{equation*}
[\iota(\widetilde{H}_{i-1,1}),t^{1,\cdots,i}_{1,\cdots,i}(u+(i-1)\dfrac{\hbar}{2})]=0.
\end{equation*}
Similarly, we obtain
\begin{align*}
&\quad[\iota(\widetilde{H}_{i-1,1}),t^{1,\cdots,i}_{1,\cdots,i-1,i+1}(u+(i-1)\dfrac{\hbar}{2})]\\
&=[T^{(2)}_{i+1,i+1}+\hbar\sum_{v\leq i} T^{(1)}_{i+1,v}T^{(1)}_{v,i+1},t^{1,\cdots,i}_{1,\cdots,i-1,i+1}(u+(i-1)\dfrac{\hbar}{2})]\\
&\quad-[T^{(2)}_{i+2,i+2}+\hbar\sum_{v\leq i+1} T^{(1)}_{i+2,v}T^{(1)}_{v,i+2},t^{1,\cdots,i}_{1,\cdots,i}(-u+(i-1)\dfrac{\hbar}{2}),t^{1,\cdots,i}_{1,\cdots,i-1,i+1}(u+(i-1)\dfrac{\hbar}{2})]\\
&\quad-\dfrac{i}{2}\hbar[(T^{(1)}_{i+1,i+1}-T^{(1)}_{i+2,i+2}),t^{1,\cdots,i}_{1,\cdots,i-1,i+1}(u+(i-1)\dfrac{\hbar}{2})]\\
&\quad+[\dfrac{\hbar}{2}((T^{(1)}_{i+1,i+1})^2-(T^{(1)}_{i+2,i+2})^2),t^{1,\cdots,i}_{1,\cdots,i-1,i+1}(u+(i-1)\dfrac{\hbar}{2})].
\end{align*}
By \eqref{al100} and \eqref{al103-1}, we obtain
\begin{align*}
&\quad-[T^{(2)}_{i+2,i+2}+\hbar\sum_{v\leq i+1} T^{(1)}_{i+2,v}T^{(1)}_{v,i+2}t^{1,\cdots,i}_{1,\cdots,i}(u+(i-1)\dfrac{\hbar}{2}),t^{1,\cdots,i}_{1,\cdots,i-1,i+1}(u+(i-1)\dfrac{\hbar}{2})]\\
&=0,\\
&\quad[T^{(2)}_{i+1,i+1}+\hbar\sum_{v\leq i} T^{(1)}_{i+1,v}T^{(1)}_{v,i+1},t^{1,\cdots,i}_{1,\cdots,i-1,i+1}(u+(i-1)\dfrac{\hbar}{2})]\\
&=-(u+\dfrac{\hbar}{2}(i-1))t^{1,\cdots,i}_{1,\cdots,i-1,i+1}(u+(i-1)\dfrac{\hbar}{2})+\hbar T^{(1)}_{i+1,i+1}t^{1,\cdots,i}_{1,\cdots,i-1,i+1}(u+(i-1)\dfrac{\hbar}{2}),\\
&\quad-\dfrac{i}{2}\hbar[(T^{(1)}_{i+1,i+1}-T^{(1)}_{i+2,i+2}),t^{1,\cdots,i}_{1,\cdots,i-1,i+1}(u+(i-1)\dfrac{\hbar}{2})]\\
&=\dfrac{\hbar}{2}it^{1,\cdots,i}_{1,\cdots,i-1,i+1}(-u+(i-1)\dfrac{\hbar}{2}),\\
&\quad[\hbar\sum_{v\leq i-2} T^{(1)}_{v,i-1}T^{(1)}_{i-1,v},t^{1,\cdots,i}_{1,\cdots,i-1,i+1}(u+(i-1)\dfrac{\hbar}{2})]\\
&=0,\\
&\quad+[\dfrac{\hbar}{2}((T^{(1)}_{i+1,i+1})^2-(T^{(1)}_{i+2,i+2})^2),t^{1,\cdots,i}_{1,\cdots,i-1,i+1}(u+(i-1)\dfrac{\hbar}{2})]\\
&=-\dfrac{\hbar}{2}\{T^{(1)}_{i+1,i+1},t^{1,\cdots,i}_{1,\cdots,i-1,i+1}(u+(i-1)\dfrac{\hbar}{2})\}.
\end{align*}
Thus, we find the relation \eqref{ga112} for $1\leq i\leq n-2$.

\end{proof}

\section{The images of the higher terms via the evaluation map for the affine Yangian of type $A$}
Let us take a comletely symmetric polynomial:
\begin{equation*}
h_{m}(z_1,\cdots,z_n)=\prod_{1\leq i_1\leq\cdots\leq i_{m}\leq n}z_{i_1}\cdots z_{i_m}.
\end{equation*}
By the definition, the following relation holds:
\begin{equation}
h_{m}(z_1,\cdots,z_{n+1})=h_m(z_1,\cdots,z_n)+z_{n+1}h_{m-1}(z_1,\cdots,z_{n+1}).\label{rel0}
\end{equation}
For a complex number $c$ and positive integers $m,n$, we also set a polynomial
\begin{equation*}
f^m_n(z_1,\cdots,z_n)=\begin{cases}
h_{m-n}((z_1+1)c,\cdots,(z_n+1)c)&\text{ if }2\leq n\leq m,\\
0&\text{ otherwise}.
\end{cases}
\end{equation*}
\begin{Lemma}\label{Lem126}
The following relations hold:
\begin{gather}
f^m_{n+1}(z_1,\cdots,z_{n+1})=f^m_n(z_1,\cdots,z_n)+(z_{n+1}+1)cf^{m-1}_{n+1}(z_1,\cdots,z_{n+1}),\label{rel1}\\
f^m_n(z_1,\cdots,z_{n-1},z_n+a)-f^m_n(z_1,\cdots,z_{n-1},z_n)=acf^m_{n+1}(z_1,\cdots,z_n,z_n+a)\label{rel2}
\end{gather}
for a complex number $a$.
\end{Lemma}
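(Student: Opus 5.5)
Both identities will be reduced to statements about complete homogeneous symmetric polynomials. I read the product sign in the definition of $h_m$ as a sum over multisets, $h_m=\sum_{i_1\le\cdots\le i_m}z_{i_1}\cdots z_{i_m}$, since that is the reading under which \eqref{rel0} holds. Throughout I write $w_i=(z_i+1)c$, so that $f^m_n(z_1,\dots,z_n)=h_{m-n}(w_1,\dots,w_n)$ for $2\le n\le m$. The tool is the generating function
\begin{equation*}
\sum_{k\ge0}h_k(w_1,\dots,w_N)\,t^k=\prod_{i=1}^N\frac{1}{1-w_it},
\end{equation*}
with the convention $h_k=0$ for $k<0$, matching $f^m_n=0$ outside $2\le n\le m$. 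Boundary cases ($n+1>m$, or $n<2$) are checked separately, since there the definition of $f$ forces vanishing terms rather than the $h$-formula.

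\textbf{Step 1: relation \eqref{rel2}.} I take the difference of generating functions in the last variable only. With $d=ac$ (note $w_n$ shifts by $ac$ when $z_n\mapsto z_n+a$),
\begin{equation*}
\frac{1}{1-(w_n+d)t}-\frac{1}{1-w_nt}=\frac{d\,t}{(1-w_nt)(1-(w_n+d)t)}.
\end{equation*}
Multiplying by $\prod_{i<n}(1-w_it)^{-1}$ and reading off the coefficient of $t^{m-n}$ gives
\begin{equation*}
h_{m-n}(w_1,\dots,w_{n-1},w_n+d)-h_{m-n}(w_1,\dots,w_n)=d\,h_{m-n-1}(w_1,\dots,w_n,w_n+d).
\end{equation*}
The right-hand side is exactly $ac\,f^m_{n+1}(z_1,\dots,z_n,z_n+a)$. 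The degenerate cases follow because both sides vanish: for $n=m$ the left side is a difference of constants equal to $1$, and for $n>m$ everything is $0$.

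\textbf{Step 2: relation \eqref{rel1}.} I apply \eqref{rel0} with $m$ replaced by $m-n-1$ and the variables $w_1,\dots,w_{n+1}$:
\begin{equation*}
h_{m-n-1}(w_1,\dots,w_{n+1})=h_{m-n-1}(w_1,\dots,w_n)+w_{n+1}\,h_{m-n-2}(w_1,\dots,w_{n+1}).
\end{equation*}
The left-hand side is $f^m_{n+1}$, and the last term is $(z_{n+1}+1)c\,f^{m-1}_{n+1}$. It then remains to identify the first term on the right with the term $f^m_n(z_1,\dots,z_n)$ written in \eqref{rel1}.

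\textbf{Main obstacle: the degree check in Step 2.} This identification is where I expect the real difficulty. By definition $f^m_n(z_1,\dots,z_n)=h_{m-n}(w_1,\dots,w_n)$, which has degree $m-n$ in the $w$'s, while the term produced by \eqref{rel0} has degree $m-n-1$. Before expanding anything, I will compare homogeneous components on both sides of \eqref{rel1}. If the degrees genuinely disagree, the identity as printed can only hold under a different indexing convention for $f$, and that convention must be pinned down from how \eqref{rel1} is used later. A concrete test is $n=2$, $m=3$: the printed right-hand side is $(w_1+w_2)+w_3\cdot 0$, while the left-hand side is $f^3_3=1$. I would carry out this check first. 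If it fails, I will not be able to prove \eqref{rel1} in its printed form, and the argument of Step 2 establishes only the version whose first term is $h_{m-n-1}(w_1,\dots,w_n)$.
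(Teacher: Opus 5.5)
Your argument is correct, and the obstacle you anticipate in Step 2 is real: \eqref{rel1} as printed is false. In the $w_i$, its first term on the right is homogeneous of degree $m-n$, while the other two terms have degree $m-n-1$. Your test $n=2$, $m=3$ ($1$ versus $w_1+w_2$) confirms this. What \eqref{rel0} yields, and what your Step 2 proves, is
\begin{equation*}
f^m_{n+1}(z_1,\dots,z_{n+1})=f^{m-1}_n(z_1,\dots,z_n)+(z_{n+1}+1)c\,f^{m-1}_{n+1}(z_1,\dots,z_{n+1}).
\end{equation*}
This is the version the paper actually relies on, for three reasons:
\begin{itemize}
\item Its justification ``follows from \eqref{rel0}'' gives only this version.
\item In its proof of \eqref{rel2}, the misprinted first term occurs in both expansions and cancels.
\item Its final step recombines $f^{k-1}_n(z_1,\dots,z_{n-1},z_n+a)+(z_n+1)c\,f^{k-1}_{n+1}(z_1,\dots,z_n,z_n+a)$ into $f^k_{n+1}(z_1,\dots,z_n,z_n+a)$, which is exactly the corrected identity with $z_n$ as the distinguished variable.
\end{itemize}
So for \eqref{rel1} your Step 2 is the paper's proof, applied to the statement it actually needs.

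For \eqref{rel2} you take a genuinely different route. The paper inducts on $m$. It expands both values of $f^k_n$ by \eqref{rel1} and cancels the terms free of $z_n$. It then applies the induction hypothesis to the difference of the $f^{k-1}_n$ and reassembles the result with \eqref{rel1}. Your difference-of-fractions identity for the generating function gives $h_k(\dots,y)-h_k(\dots,x)=(y-x)\,h_{k-1}(\dots,x,y)$ in one step. It needs no induction and does not use \eqref{rel1} at all, so it is unaffected by the misprint. The paper's version has the merit of staying inside the recursions \eqref{rel0}--\eqref{rel1} that drive the computations in the appendix.

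One loose end: you say the cases $n<2$ will be checked separately, but you only treat $n\geq m$. Under the stated convention $f^m_1=0$, both identities fail at $n=1$:
\begin{itemize}
\item \eqref{rel2} becomes $0=ac\,h_{m-2}(w_1,w_1+ac)$; for $m=2$ this says $0=ac$.
\item The corrected \eqref{rel1} with $n=1$, $m=2$ reads $1=0$.
\end{itemize}
So the lemma must be read for $n\geq2$, or with the convention $f^m_1:=h_{m-1}(w_1)$. The appendix does use a nonzero $f^m_1()$ in \eqref{e4-1}. Under that convention your generating-function argument covers $n=1$ verbatim.
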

\begin{proof}
The relation \eqref{rel1} follows from \eqref{rel0}. We show the relation \eqref{rel2} by the induction hypothesis on $m$. The case $m=1$ follows from the definition of $f^m_n$. Suppose that \eqref{rel2} follows in the case $m\leq k-1$.
By \eqref{rel0} and \eqref{rel1}, we have
\begin{align*}
&\quad f^k_n(z_1,\cdots,z_{n-1},z_n=z_n+a)-f^k_n(z_1,\cdots,z_{n-1},z_n)\\
&=f^{k}_{n-1}(z_1,\cdots,z_{n-1})+(z_n+a+1)cf^{k-1}_n(z_1,\cdots,z_{n-1},z_n+a)\\
&\quad-f^{k}_{n-1}(z_1,\cdots,z_{n-1})-(z_n+1)cf^{k-1}_n(z_1,\cdots,z_{n-1},z_n)\\
&=acf^{k-1}_n(z_1,\cdots,z_{n-1},z_n+a)+(z_n+1)c(f^{k-1}_n(z_1,\cdots,z_{n-1},z_n+a)-f^{k-1}_n(z_1,\cdots,z_{n-1},z_n))\\
&=acf^{k-1}_n(z_1,\cdots,z_{n-1},z_n+a)+(z_n+1)acf^{k-1}_{n+1}(z_1,\cdots,z_{n-1},z_n+a)\\
&=acf^{k}_{n+1}(z_{n+1}=z_n+a),
\end{align*}
where the third equality is derived from the induction hypothesis.
\end{proof}
\begin{Theorem}\label{Main}
There exists a homomorphism
\begin{gather*}
\ev_\hbar\colon y_\hbar(\mathfrak{gl}(p))\to \mathcal{U}(\widehat{\mathfrak{gl}}(p))
\end{gather*}
given by
\begin{align*}
\ev_\hbar(T^{(1)}_{i,j})&=E_{i,j},\ \ev_\hbar(T^{(0)}_{i,j})=\delta_{i,j}\\
\ev_\hbar(T^{(r)}_{i,j})&=\hbar^{r-1}\sum_{\substack{z_1,\cdots,z_n\geq0,\\1\leq x_1,\cdots,x_n\leq p}}\limits f^m_n(z_1,\cdots, z_n)E_{i,x_1}t^{-z_1-1}\\
&\qquad\qquad\qquad\qquad\qquad E_{x_1,x_2}t^{z_1-z_2}\cdots E_{x_{n-1},x_n}t^{z_{n-1}-z_n}E_{x_n,j}t^{z_n+1}.
\end{align*}
\end{Theorem}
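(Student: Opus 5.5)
Since $y_\hbar(\mathfrak{gl}(p))$ is defined by generators and the relations \eqref{ga1}--\eqref{ga3}, it suffices to check that the proposed images satisfy these three families of relations in $\mathcal{U}(\widehat{\mathfrak{gl}}(p))$. Note that in the displayed formula the index $m$ is tied to $r$ (we read $m=r$, with the sum over the number $n\geq 2$ of inner factors), and each image is a well-defined element of the degreewise completion: each summand has total degree $0$, and the outer factors $E_{i,x_1}t^{-z_1-1}$ and $E_{x_n,j}t^{z_n+1}$ ensure that modulo any fixed degree truncation only finitely many $z$ contribute. We also record at the outset that $\ev_\hbar(T^{(2)}_{i,j})=\hbar\sum_{z\geq0}\sum_x E_{i,x}t^{-z-1}E_{x,j}t^{z+1}$, since $f^2_2=1$. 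This matches the shape of the terms in Kodera's $\ev_{\hbar,\ve}(H_{i,1})$ in Theorem~\ref{thm:main}, which will be needed later for Theorem~\ref{ref}.

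\textbf{Relation \eqref{ga1}.} This is immediate. $E_{i,j}=E_{i,j}t^0$ acts by the adjoint action of $\mathfrak{gl}(p)$, and each image $\ev_\hbar(T^{(r)}_{k,l})$ is a contraction of $\mathfrak{gl}(p)$-indices over the internal $x$'s. Therefore the commutator produces only the boundary terms $\delta_{j,k}(\cdot)_{i,l}-\delta_{i,l}(\cdot)_{k,j}$, because the internal contributions cancel telescopically. No central terms arise, since $t^0$ has degree zero.

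\textbf{Relation \eqref{ga3}, the main computation.} We compute $[\ev_\hbar(T^{(2)}_{i,i}),\ev_\hbar(T^{(r)}_{k,l})]$ by commuting $E_{i,y}t^{-w-1}$ and $E_{y,i}t^{w+1}$ through each factor of the monomial $E_{k,x_1}t^{-z_1-1}\cdots E_{x_n,l}t^{z_n+1}$. The resulting terms fall into three kinds.
\begin{itemize}
\item[(a)] Terms from commuting with the two outer factors. These create words with one more internal factor, beginning $E_{i,\cdot}t^{-z'-1}$ or ending $E_{\cdot,i}t^{z'+1}$. After reindexing $w$ as a new $z$-variable, they should assemble into $\delta_{i,k}\ev_\hbar(T^{(r+1)}_{i,l})-\delta_{i,l}\ev_\hbar(T^{(r+1)}_{k,i})$. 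This is exactly where the recursion \eqref{rel1} of Lemma~\ref{Lem126} enters, namely the passage from $f^r_n$ to $f^{r+1}_{n+1}$ together with the extra factor $(z+1)c$.
\item[(b)] Central terms $s\delta_{s+u,0}c$, which produce the factors $(z+1)c$.
\item[(c)] Terms from commuting with the inner factors $E_{x_a,x_{a+1}}t^{z_a-z_{a+1}}$. These shift a single $z$-variable by the amount $w-z_a$. Using the difference identity \eqref{rel2}, $f^m_n(\dots,z+a)-f^m_n(\dots,z)=acf^m_{n+1}(\dots,z,z+a)$, the terms in which the shifted variable appears twice should telescope into the insertion of one new index.
\end{itemize}
After normal-ordering (moving negative modes left), the leftover pieces should be exactly $-\hbar(E_{k,i}\ev_\hbar(T^{(r)}_{i,l})-\ev_\hbar(T^{(r)}_{k,i})E_{i,l})$. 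Here the prefactor $\hbar^{r-1}$ matches the $\hbar$ of $T^{(2)}$, since $\hbar\cdot\hbar^{r-1}=\hbar^{r}$. I would first verify the case $r=2$ by hand to fix signs and the role of $c$, then run an induction on $r$ using \eqref{rel1}.

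\textbf{Relation \eqref{ga2}.} This is the case $r=2$, $k=l=j$ of the same computation, where the terms $\delta_{i,j}$ vanish for $i\neq j$. We then need $\ev_\hbar(T^{(3)})$ terms to cancel in the combination $T^{(1)}_{j,i}T^{(2)}_{i,j}-T^{(2)}_{j,i}T^{(1)}_{i,j}$, so \eqref{ga2} follows as a special case of the \eqref{ga3} computation.

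\textbf{Expected main obstacle.} The hard step is the bookkeeping in (c): matching the shifted $z$-variables and the central contributions against \eqref{rel2}, and making sure that the infinite sums can be legitimately reindexed in the completion. I would organize this by grouping terms according to the position $a$ at which the new index is inserted, and would prove a one-step ``insertion lemma'' for a single factor before summing over $a$.
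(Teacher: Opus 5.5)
Your plan is essentially the paper's proof: you obtain \eqref{ga1} from the $\mathfrak{gl}(p)$-equivariance of the contracted words, and you get \eqref{ga3} by commuting $\ev_\hbar(T^{(2)}_{i,i})$ through the first, inner and last factors of each word, using \eqref{rel1} to assemble the new-factor and $(z+1)c$ terms into $\ev_\hbar(T^{(r+1)})$ and \eqref{rel2} to telescope the shifted-variable terms, which is exactly how the appendix is organized. The one real difference is that you derive \eqref{ga2} as the case $r=2$, $k=l=j$ of \eqref{ga3}, where the $T^{(3)}$ terms vanish through the Kronecker deltas rather than cancelling in the combination you wrote; the paper instead checks \eqref{ga2} by a separate direct computation and writes out \eqref{ga3} only for $k=i$, $l\neq i$, so your route requires carrying out the case $k,l\neq i$ of the general computation in full.
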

\begin{proof}
It is enough to show the compatibility with \eqref{ga1}-\eqref{ga3}. The compatibility with \eqref{ga1} follows from the definition of $\ev_\hbar$ and we will give the compatibility with \eqref{ga3} in the appendix. In this proof, we will give the proof of the compatibility with \eqref{ga2}. The case $i=j$ is trivial. We assume that $i\neq j$. By the definition of $\ev_\hbar$, we have
\begin{align}
&\quad[\ev_\hbar(T^{(2)}_{i,i}),\ev_\hbar(T^{(2)}_{j,j})]\nonumber\\
&=[\hbar\sum E_{i,x}t^{-s-1}E_{x,i}t^{s+1},\hbar\sum E_{j,y}t^{-v-1}E_{y,j}t^{v+1}]\nonumber\\
&=\hbar^2\sum E_{i,x}t^{-s-1}[E_{x,i}t^{s+1},E_{j,y}t^{-v-1}]E_{y,j}t^{v+1}+\hbar^2\sum E_{i,x}t^{-s-1}E_{i,y}t^{-v-1}[E_{x,i}t^{s+1},E_{y,j}t^{v+1}]\nonumber\\
&\quad+\hbar^2\sum [E_{i,x}t^{-s-1},E_{j,y}t^{-v-1}]E_{y,j}t^{v+1}E_{x,i}t^{s+1}\nonumber\\
&\quad+\hbar^2\sum E_{j,y}t^{-v-1}[E_{i,x}t^{-s-1},E_{y,j}t^{v+1}]E_{x,i}t^{s+1}.\label{al1}
\end{align}

By a direct computation, we have
\begin{align}
\eqref{al1}_1&=-\hbar^2\sum E_{i,x}t^{-s-1}(E_{j,i}t^{s-v})E_{x,j}t^{v+1}+\hbar^2\sum (s+1)E_{i,i}t^{-s-1}E_{j,j}t^{s+1},\label{al2}\\
\eqref{al1}_2&=\hbar^2\sum E_{i,x}t^{-s-1}E_{j,i}t^{-v-1}(E_{x,j}t^{s+v+2})-\hbar^2\sum E_{i,j}t^{-s-1}E_{j,y}t^{-v-1}E_{y,i}t^{s+v+2},\label{al3}\\
\eqref{al1}_3&=\hbar^2\sum (E_{i,y}t^{-s-v-2})E_{y,j}t^{v+1}E_{j,i}t^{s+1}-\hbar^2\sum E_{j,x}t^{-s-v-2}E_{i,j}t^{v+1}E_{x,i}t^{s+1},\label{al4}\\
\eqref{al1}_4&=\hbar^2\sum E_{j,x}t^{-v-1}(E_{i,j}t^{v-s})E_{x,i}t^{s+1}-\hbar^2\sum (s+1)E_{j,j}t^{-s-1}E_{i,i}t^{s+1}.\label{al5}
\end{align}
By a direct computation, we obtain
\begin{align*}
&\quad\eqref{al2}_1+\eqref{al3}_1+\eqref{al4}_1\\
&=-\hbar^2\sum E_{i,x}t^{-s-1}E_{j,i}t^{s-v}E_{x,j}t^{v+1}+\hbar^2\sum E_{i,x}t^{-s-1}E_{j,i}t^{-v-1}E_{x,j}t^{s+v+2}\\
&\quad+\hbar^2\sum E_{i,y}t^{-s-v-2}E_{y,j}t^{v+1}E_{j,i}t^{s+1}\\
&=-\hbar^2\sum E_{i,x}t^{-s-v-1}E_{j,i}t^{s}E_{x,j}t^{v+1}+\hbar^2\sum E_{i,y}t^{-s-v-2}E_{y,j}t^{v+1}E_{j,i}t^{s+1},\\
&=-\hbar^2\sum E_{i,x}t^{-v-1}E_{x,j}t^{v+1}E_{j,i}-\hbar^2\sum E_{i,x}t^{-s-v-1}[E_{j,i}t^{s},E_{x,j}t^{v+1}]\\
&=-\hbar^2\sum E_{i,x}t^{-v-1}E_{x,j}t^{v+1}E_{j,i}-\hbar^2\sum (s+1)E_{i,i}t^{-s-1}E_{j,j}t^{s+1}\\
&\quad+\hbar^2\sum (s+1)E_{i,x}t^{-s-1}E_{x,i}t^{s+1},\\
&\quad\eqref{al3}_2+\eqref{al4}_2+\eqref{al5}_1\\
&=-\hbar^2\sum E_{i,j}t^{-s-1}E_{j,y}t^{-v-1}E_{y,i}t^{s+v+2}-\hbar^2\sum E_{j,x}t^{-s-v-2}E_{i,j}t^{v+1}E_{x,i}t^{s+1}\\
&\quad+\sum E_{j,x}t^{-v-1}E_{i,j}t^{v-s}E_{x,i}t^{s+1}\\
&=-\hbar^2\sum E_{i,j}t^{-s-1}E_{j,y}t^{-v-1}E_{y,i}t^{s+v+2}+\hbar^2\sum E_{j,x}t^{-v-1}E_{i,j}t^{-s}E_{x,i}t^{s+v+1}\\
&=\hbar^2E_{i,j}\sum E_{j,y}t^{-v-1}E_{y,i}t^{v+1}+\hbar^2\sum [E_{j,x}t^{-v-1},E_{i,j}t^{-s}]E_{x,i}t^{s+v+1}\\
&=\hbar^2E_{i,j}\sum E_{j,y}t^{-v-1}E_{y,i}t^{v+1}+\hbar^2\sum (s+1)E_{j,j}t^{-s-1}E_{i,i}t^{s+1}\\
&\quad-\hbar^2(s+1)E_{i,x}t^{-s-1}E_{x,i}t^{s+1}.
\end{align*}
Thus, we obtain
\begin{align*}
&\quad[\ev_\hbar(T^{(2)}_{i,i}),\ev_\hbar(T^{(2)}_{j,j})]\nonumber\\
&=-\hbar^2\sum E_{i,x}t^{-v-1}E_{x,j}t^{v+1}E_{j,i}+\hbar^2E_{i,j}\sum E_{j,y}t^{-v-1}E_{y,i}t^{v+1}\\
&=-\hbar^2 E_{j,i}\sum E_{i,x}t^{-v-1}E_{x,j}t^{v+1}+\hbar^2\sum E_{j,y}t^{-v-1}E_{y,i}t^{v+1}E_{i,j}\\
&=-\hbar(\ev_\hbar(T^{(1)}_{j,i})\ev_\hbar(T^{(2)}_{i,j})-\ev_\hbar(T^{(2)}_{j,i})\ev_\hbar(T^{(1)}_{i,j})).
\end{align*}
\end{proof}
By Definitions~\ref{Def32} and \ref{Def33}, we find that there exists a natural homomorphism
\begin{equation*}
\kappa\colon Y_\hbar(\mathfrak{sl}(n))\to Y_{\hbar,\ve}(\widehat{\mathfrak{sl}}(n))
\end{equation*}
given by $H_{i,r}\mapsto H_{i,r},X^\pm_{i,r}\mapsto X^\pm_{i,r}$. By Lemma~\ref{Lem126} and the definition of $\iota$, $\kappa$, $\ev_{\hbar,\ve}$ and $\ev_\hbar$, we have
\begin{gather*}
\ev_{\hbar,\ve}\circ\kappa(X^\pm_{i,0})=\ev_\hbar\circ\iota(X^\pm_{i,0}),\\
\ev_{\hbar,\ve}\circ\kappa(X^\pm_{i,1})=\ev_\hbar\circ\iota(X^\pm_{i,1}).
\end{gather*}
Thus, we have
\begin{align*}
\ev_{\hbar,\ve}(X^\pm_{i,r})=\ev_\hbar\circ\iota(X^\pm_{i,r}).
\end{align*}
By using Theorem~\ref{Main}, \eqref{ee1} and \eqref{ee2}, we can write down $\ev_{\hbar,\ve}(H_{i,r})$ and $\ev_{\hbar,\ve}(X^\pm_{i,r})$ for $1\leq i\leq n-1$ and $r\geq2$.

\appendix
\section{Compatibility with \eqref{ga3}}
In the appendix, we will show the compatibility of $\ev_\hbar$ with \eqref{ga3}. We only show the case that $k=i,l=j\neq i$. The other cases can be given by the similar way by using the relation
\begin{align*}
\omega(\ev_\hbar(T^{(r)}_{i,j}))=\ev_\hbar(T^{(r)}_{i,j}),
\end{align*}
where $\omega$ is an anti-automorphism of $U(\widehat{\mathfrak{gl}}(n))$ given by $\omega(E_{i,j}t^{s})=E_{j,i}t^{-s}$ and $\omega(c)=c$.

Here after, in order to simplify the notation, we sometimes omit 
\begin{equation*}
E_{i,x_1}t^{-z_1-1}E_{x_1,x_2}t^{z_1-z_2}\cdots E_{x_{s-1},x_s}t^{z_{s-1}-z_s},\ E_{x_{s+1},x_{s+2}}t^{z_{s+1}-z_{s+2}}\cdots E_{x_{n-1},x_n}t^{z_{n-1}-z_n}E_{x_n,j}t^{z_n+1}.
\end{equation*}
We also denote $f^m_n(z_1,\cdots,z_n)$ by $f^m_n()$ and $f^m_n(z_1,\cdots, z_{i-1},a,z_{i+1},\cdots, z_n)$ by $f^m_n(z_{i+1}=a)$.

We also assume that $\hbar=1$ for the simplicity. By a direct computation, we obtain
\begin{align}
&\quad\sum f^m_n()[\sum E_{i,w}t^{-u-1}E_{w,i}t^{u+1},E_{i,x_1}t^{-z_1-1}]E_{x_1,x_2}t^{z_2-z_1}\nonumber\\
&=\sum f^m_n()E_{i,w}t^{-u-1}E_{w,x_1}t^{u-z_1}E_{x_1,x_2}t^{z_2-z_1}-\sum f^m_n()E_{i,x_1}t^{-u-1}E_{i,i}t^{u-z_1}E_{x_1,x_2}t^{z_2-z_1}\nonumber\\
&\quad+\sum f^m_n()(z_1+1)cE_{i,x_1}t^{-z_1-1}E_{x_1,x_2}t^{z_2-z_1}+\sum f^m_n()(z_1+1)E_{i,i}t^{-z_1-1}\delta_{i,x_1}E_{x_1,x_2}t^{z_2-z_1}\nonumber\\
&\quad+\sum f^m_n()E_{i,x_1}t^{-u-z_1-2}E_{i,i}t^{u+1}E_{x_1,x_2}t^{z_2-z_1}\nonumber\\
&\quad-\sum f^m_n()\delta_{i,x_1}E_{i,w}t^{-u-z_1-2}E_{w,i}t^{u+1}E_{x_1,x_2}t^{z_2-z_1},\label{A1}\\
&\quad\sum f^m_n()E_{x_{s-1},x_s}t^{z_{s-1}-z_s}[\sum E_{i,w}t^{-u-1}E_{w,i}t^{u+1},E_{x_s,x_{s+1}}t^{z_s-z_{s+1}}]E_{x_{s},x_{s+1}}t^{z_{s}-z_{s+1}}\nonumber\\
&=\sum f^m_n()\delta_{i,x_s}E_{x_{s-1},x_s}t^{z_{s-1}-z_s}E_{i,w}t^{-u-1}E_{w,x_{s+1}}t^{u+1+z_s-z_{s+1}}E_{x_{s},x_{s+1}}t^{z_{s}-z_{s+1}}\nonumber\\
&\quad-\sum f^m_n()E_{x_{s-1},x_s}t^{z_{s-1}-z_s}E_{i,x_{s+1}}t^{-u-1}E_{x_s,i}t^{u+1+z_s-z_{s+1}}E_{x_{s},x_{s+1}}t^{z_{s}-z_{s+1}}\nonumber\\
&\quad+\sum (u+1)\delta_{u+1,z_{s+1}-z_s}\delta_{i,x_s}cE_{x_{s-1},x_s}t^{z_{s-1}-z_s}E_{i,x_{s+1}}t^{-u-1}E_{x_{s},x_{s+1}}t^{z_{s}-z_{s+1}}\nonumber\\
&\quad+\sum f^m_n()(u+1)\delta_{u+1,z_{s+1}-z_s}\delta_{x_{s+1},x_s}E_{x_{s-1},x_s}t^{z_{s-1}-z_s}E_{i,i}t^{-u-1}E_{x_{s},x_{s+1}}t^{z_{s}-z_{s+1}}\nonumber\\
&\quad+\sum f^m_n()E_{x_{s-1},x_s}t^{z_{s-1}-z_s}E_{i,x_{s+1}}t^{z_s-z_{s+1}-u-1}E_{x_s,i}t^{u+1}E_{x_{s},x_{s+1}}t^{z_{s}-z_{s+1}}\nonumber\\
&\quad-\sum f^m_n()\delta_{x_{s+1},i}E_{x_{s-1},x_s}t^{z_{s-1}-z_s}E_{x_s,w}t^{z_s-z_{s+1}-u-1}E_{w,i}t^{u+1}E_{x_{s},x_{s+1}}t^{z_{s}-z_{s+1}}\nonumber\\
&\quad-\sum f^m_n()(u+1)\delta_{u+1,z_s-z_{s+1}}\delta_{i,x_{s+1}}cE_{x_s,i}t^{u+1}-\sum f^m_n()(u+1)\delta_{u+1,z_s-z_{s+1}}\delta_{x_s,x_{s+1}}E_{i,i}t^{u+1},\label{As}\\
&\quad \sum f^m_n()E_{x_{n-1},x_n}t^{z_{n-1}-z_n}[\sum E_{i,w}t^{-u-1}E_{w,i}t^{u+1},E_{x_n,j}t^{z_n+1}]\nonumber\\
&=\sum f^m_n()\delta_{i,x_n}E_{x_{n-1},x_n}t^{z_{n-1}-z_n}E_{i,w}t^{-u-1}E_{w,j}t^{u+z_n+2}\nonumber\\
&\quad-\sum f^m_n()E_{x_{n-1},x_n}t^{z_{n-1}-z_n}E_{i,j}t^{-u-1}E_{x_n,i}t^{z_n+u+2}\nonumber\\
&\quad+\sum f^m_n()E_{x_{n-1},x_n}t^{z_{n-1}-z_n}E_{i,j}t^{z_n-u}E_{x_n,i}t^{u+1}\nonumber\\
&\quad-\sum f^m_n()(z_n+1)\delta_{x_n,j}E_{x_{n-1},x_n}t^{z_{n-1}-z_n}E_{i,i}t^{u+1}.\label{An}
\end{align}
We divide \eqref{As} into two picies:
\begin{gather*}
B(s)=\eqref{As}_1+\eqref{As}_6+\eqref{As}_3+\eqref{As}_4,\\
C(s)=\eqref{As}_5+\eqref{As}_2+\eqref{As}_7+\eqref{As}_8.
\end{gather*}
At first, we compute $\eqref{A1}+B(1)$. We divide it into 
\begin{gather*}
\eqref{A1}_1+\eqref{A1}_3,\ \eqref{A1}_2+\eqref{A1}_5+B(1)_1+\eqref{A1}_4,\\
\eqref{A1}_6+B(1)_2,\ B(1)_3+B(1)_4.
\end{gather*}
As for $\eqref{A1}_1+\eqref{A1}_3$ and $B(1)_3+B(1)_4$, we obtain
\begin{align}
&\quad\eqref{A1}_1+\eqref{A1}_3\nonumber\\
&=\sum f^m_n()E_{i,w}t^{-u-1}E_{w,x_1}t^{u-z_1}E_{x_1,x_2}t^{z_1-z_2}\nonumber\\
&\quad+\sum f^m_n()(z_1+1)cE_{i,x_1}t^{-z_1-1}E_{x_1,x_2}t^{z_1-z_2},\label{e1-0}\\
&\quad B(1)_3+B(1)_4\nonumber\\
&=-\sum f^m_n(z_1=z_2+u+1)(u+1)\delta_{i,x_2}cE_{i,x_1}t^{-z_2-u-2}E_{x_1,i}t^{u+1}\nonumber\\
&\qquad+\sum f^m_n(z_2=z_1+u+1)(u+1)E_{i,x_1}t^{-z_1-1}E_{i,i}t^{-u-1}E_{x_1,x_2}t^{z_1+u+2}.\label{e1-3}
\end{align}
As for $\eqref{A1}_2+\eqref{A1}_5+B(1)_1+\eqref{A1}_4$, we obtain
\begin{align}
&\quad\eqref{A1}_2+\eqref{A1}_5\nonumber\\
&=-\sum f^m_n()E_{i,x_1}t^{-u-z_1-2}E_{i,i}t^{u+1}E_{x_1,x_2}t^{z_1-z_2}\nonumber\\
&\quad-\sum f^m_n(z_1=z_1+u)E_{i,x_1}t^{-u-1}E_{i,i}t^{-z_1}E_{x_1,x_2}t^{z_1+u-z_2}\nonumber\\
&\quad+\sum f^m_n()E_{i,x_1}t^{-u-z_1-2}E_{i,i}t^{u+1}E_{x_1,x_2}t^{z_1-z_2}\nonumber\\
&=-\sum f^m_n(z_1=z_1+u)E_{i,x_1}t^{-u-1}E_{i,i}t^{-z_1}E_{x_1,x_2}t^{z_1+u-z_2}\nonumber\\
&=-\sum f^m_n(z_1=z_1+u)E_{i,i}t^{-z_1}E_{i,x_1}t^{-u-1}E_{x_1,x_2}t^{z_1+u-z_2}\nonumber\\
&\quad-\sum f^m_n(z_1=z_1+u)(\delta_{x_1,i}E_{i,i}t^{-z_1-u-1}-\delta_{i,i}E_{i,x_1}t^{-u-z_1-1})E_{x_1,x_2}t^{z_1+u-z_2}\nonumber\\
&=-\sum f^m_n()E_{i,i}E_{i,x_1}t^{-z_1-1}E_{x_1,x_2}t^{z_1+u-z_2}\nonumber\\
&\quad-\sum f^m_n(z_1=z_1+u+1)E_{i,i}t^{-z_1-1}E_{i,x_1}t^{-u-1}E_{x_1,x_2}t^{z_1+u+1-z_2}\nonumber\\
&\quad-\sum f^m_n()(z_1+1)(\delta_{x_1,i}E_{i,i}t^{-z_1-1}-\delta_{i,i}E_{i,x_1}t^{-z_1-1})E_{x_1,x_2}t^{z_1-z_2}.
\end{align}
Thus, we have
\begin{align}
&\quad\eqref{A1}_2+\eqref{A1}_5+B(1)_1+\eqref{A1}_4\nonumber\\
&=-E_{i,i}\sum f^m_n(z_1=u)E_{i,x_1}t^{-u-1}E_{x_1,x_2}t^{u-z_2}\nonumber\\
&\qquad+\sum (f^m_n()-f^m_n(z_1=z_1+u+1))E_{i,i}t^{-z_1-1}E_{i,w}t^{-u-1}(E_{w,x_{2}}t^{u+1+z_1-z_{2}})\nonumber\\
&\quad+\sum f^m_n()(z_1+1)(E_{i,x_1}t^{-z_1-1})E_{x_1,x_2}t^{z_1-z_2}\nonumber\\
&=-E_{i,i}\sum f^m_n(z_1=u)E_{i,x_1}t^{-u-1}E_{x_1,x_2}t^{u-z_2}\nonumber\\
&\quad-\sum f^m_{n+1}(z_0=z_1+u+1)(u+1)cE_{i,i}t^{-z_1-1}E_{i,w}t^{-u-1}(E_{w,x_{2}}t^{u+1+z_1-z_{2}})\nonumber\\
&\quad+\sum f^m_n()(z_1+1)(E_{i,x_1}t^{-z_1-1})E_{x_1,x_2}t^{z_1-z_2}.\label{e1-1}
\end{align}
As for $\eqref{A1}_6+B(1)_2$, we have
\begin{align*}
&\quad\eqref{A1}_6+B(1)_2\nonumber\\
&=-\sum f^m_n() E_{i,w}t^{-u-z_1-2}E_{w,i}t^{u+1}E_{i,x_2}t^{z_1-z_2}\nonumber\\
&\quad+\sum f^m_n()E_{i,x_1}t^{-z_1-1}E_{i,x_2}t^{z_1-z_2-u-1}E_{x_1,i}t^{u+1}\nonumber\\
&=-\sum f^m_n() E_{i,w}t^{-u-z_1-2}E_{i,x_2}t^{z_1-z_2}E_{w,i}t^{u+1}\nonumber\\
&\quad-\sum f^m_n() E_{i,w}t^{-u-z_1-2}[E_{w,i}t^{u+1},E_{i,x_2}t^{z_1-z_2}]\nonumber\\
&\quad+\sum f^m_n(z_1=z_1+u+1)E_{i,x_1}t^{-z_1-u-2}E_{i,x_2}t^{z_1-z_2}E_{x_1,i}t^{u+1}\nonumber\\
&\quad+\sum f^m_n()E_{i,x_1}t^{-z_1-1}E_{i,x_2}t^{-z_2-u-1}E_{x_1,i}t^{u+z_1+1}\nonumber\\
&=\sum (f^m_n(z_1=z_1+u+1)-f^m_n())E_{i,x_1}t^{-z_1-u-2}E_{i,x_2}t^{z_1-z_2}E_{x_1,i}t^{u+1}\nonumber\\
&\quad-\sum f^m_n() E_{i,w}t^{-u-z_1-2}[E_{w,i}t^{u+1},E_{i,x_2}t^{z_1-z_2}]\nonumber\\
&\quad+\sum f^m_n()E_{i,x_1}t^{-z_1-1}E_{i,x_2}t^{-z_2-u-1}E_{x_1,i}t^{u+z_1+1}.
\end{align*}
Since we obtain
\begin{align*}
&\quad\sum (f^m_n(z_1=z_1+u+1)-f^m_n())E_{i,x_1}t^{-z_1-u-2}E_{i,x_2}t^{z_1-z_2}E_{x_1,i}t^{u+1}\nonumber\\
&=\sum (f^m_n(z_1=z_1+u+1)-f^m_n())E_{i,x_1}t^{-z_1-u-2}E_{x_1,i}t^{u+1}E_{i,x_2}t^{z_1-z_2}\nonumber\\
&\quad-\sum (f^m_n(z_1=z_1+u+1)-f^m_n()) E_{i,w}t^{-u-z_1-2}((u+1)\delta_{u+1,z_2-z_1}\delta_{w,x_2}c)\nonumber\\
&\quad+\sum (f^m_n(z_1=z_1+u+1)-f^m_n()) E_{i,w}t^{-u-z_1-2})(\delta_{w,x_2}E_{i,i}t^{z_1-z_2+u+1})\nonumber\\
&\quad-\sum (f^m_n(z_1=z_1+u+1)-f^m_n()) E_{i,w}t^{-u-z_1-2})E_{w,x_2}t^{z_1-z_2+u+1})\nonumber\\
&\quad-\sum (f^m_n(z_1=z_1+u+1)-f^m_n()) E_{i,w}t^{-u-z_1-2}((u+1)\delta_{u+1,z_2-z_1}\delta_{w,i}\delta_{x_2,i})
\end{align*}
and
\begin{align*}
&\quad-\sum f^m_n() E_{i,w}t^{-u-z_1-2}[E_{w,i}t^{u+1},E_{i,x_2}t^{z_1-z_2}]\nonumber\\
&=-\sum f^m_n() E_{i,w}t^{-u-z_1-2}(E_{w,x_2}t^{z_1-z_2+u+1})+\sum f^m_n() E_{i,w}t^{-u-z_1-2}(\delta_{w,x_2}E_{i,i}t^{z_1-z_2+u+1})\nonumber\\
&\quad-\sum f^m_n() E_{i,w}t^{-u-z_1-2}((u+1)\delta_{u+1,z_2-z_1}\delta_{w,x_2}c)\nonumber\\
&\quad-\sum f^m_n() E_{i,w}t^{-u-z_1-2}((u+1)\delta_{u+1,z_2-z_1}\delta_{w,i}\delta_{x_2,i})
\end{align*}
by a direct computation, we obtain
\begin{align}
&\quad\eqref{A1}_6+B(1)_2\nonumber\\
&=\sum (f^m_n(z_1=z_1+u+1)-f^m_n()) E_{i,w}t^{-u-z_1-2}E_{w,i}t^{u+1}E_{i,x_2}t^{z_1-z_2}\nonumber\\
&\quad+\sum (f^m_n(z_1=z_1+u+1)-f^m_n()) E_{i,w}t^{-u-z_1-2}(\delta_{w,x_2}E_{i,i}t^{z_1-z_2+u+1})\nonumber\\
&\quad-\sum (f^m_n(z_1=z_1+u+1)-f^m_n()) E_{i,w}t^{-u-z_1-2}(E_{w,x_2}t^{z_1-z_2+u+1})\nonumber\\
&\quad-\sum (f^m_n(z_1=z_1+u+1)-f^m_n()) E_{i,w}t^{-u-z_1-2}((u+1)\delta_{u+1,z_2-z_1}\delta_{w,x_2}c)\nonumber\\
&\quad-\sum (f^m_n(z_1=z_1+u+1)-f^m_n()) E_{i,w}t^{-u-z_1-2}((u+1)\delta_{u+1,z_2-z_1}\delta_{w,i}\delta_{x_2,i})\nonumber\\
&\quad-\sum f^m_n() E_{i,w}t^{-u-z_1-2}E_{w,x_2}t^{z_1-z_2+u+1}+\sum f^m_n() E_{i,w}t^{-u-z_1-2}(\delta_{w,x_2}E_{i,i}t^{z_1-z_2+u+1})\nonumber\\
&\quad-\sum f^m_n()E_{i,w}t^{-u-z_1-2}((u+1)\delta_{u+1,z_2-z_1}\delta_{w,x_2}c)\nonumber\\
&\quad-\sum f^m_n() E_{i,w}t^{-u-z_1-2}((u+1)\delta_{u+1,z_2-z_1}\delta_{w,i}\delta_{x_2,i})\nonumber\\
&\quad+\sum f^m_n()E_{i,x_1}t^{-z_1-1}E_{i,x_2}t^{-z_2-u-1}E_{x_1,i}t^{u+z_1+1}\nonumber\\
&=\sum f^m_{n+1}(z_0=z_1+u+1) (u+1)cE_{i,w}t^{-u-z_1-2}E_{w,i}t^{u+1}E_{i,x_2}t^{z_1-z_2}\nonumber\\
&\quad+\sum f^m_n(z_1=z_1+u+1) E_{i,x_2}t^{-u-z_1-2}(E_{i,i}t^{z_1-z_2+u+1})\nonumber\\
&\quad-\sum f^m_n(z_1=z_1+u+1) E_{i,w}t^{-u-z_1-2}(E_{w,x_2}t^{z_1-z_2+u+1})\nonumber\\
&\quad-\sum f^m_n(z_1=z_1+u+1) E_{i,x_2}t^{-u-z_1-2}((u+1)\delta_{u+1,z_2-z_1}c)\nonumber\\
&\quad-\sum f^m_n(z_1=z_1+u+1) E_{i,i}t^{-u-z_1-2}((u+1)\delta_{u+1,z_2-z_1}\delta_{x_2,i})\nonumber\\
&\quad+\sum f^m_n()E_{i,x_1}t^{-z_1-1}E_{i,x_2}t^{-z_2-u-1}E_{x_1,i}t^{u+z_1+1}.\label{e1-2}
\end{align}
Next, we compute $B(s+1)+C(s)$. We divide it into 
\begin{align*}
B(s+1)_1+C(s)_2,\ B(s+1)_4+C(s)_1, B(s+1)_2+C(s)_3,\ B(s+1)_3+C(s)_4.
\end{align*}
By a direct computation, we obtain
\begin{align}
&\quad B(s+1)_2+C(s)_3\nonumber\\
&=\sum f^m_n(z_{s+1}=z_s+u+1)E_{i,x_{s+1}}t^{-u-1}((u+1)\delta_{i,x_s}c)E_{x_{s+1},x_{s+2}}t^{z_s+u+1-z_{s+2}}\nonumber\\
&=-\sum f^m_n()E_{x_{s},x_{s+1}}t^{z_{s}-z_{s+1}}((u+1)\delta_{u+1,z_{s+1}-z_{s+2}}\delta_{i,x_{s+2}}c)E_{x_{s+1},i}t^{u+1},\label{e2-3}\\
&\quad B(s+1)_3+C(s)_4\nonumber\\
&=\sum f^m_n(z_{s+1}=z_s+u+1)E_{i,i}t^{-u-1}((u+1)\delta_{u+1,z_{s+1}-z_s})E_{x_{s},x_{s+2}}t^{z_{s+1}-z_{s+2}}\nonumber\\
&\quad-\sum f^m_n()E_{x_{s},x_{s+2}}t^{z_{s}-z_{s+1}}((u+1)\delta_{u+1,z_{s+1}-z_{s+2}})E_{i,i}t^{u+1}.\label{e2-4}.
\end{align}
Since we obtain
\begin{align*}
&\quad-\sum f^m_n()E_{i,x_{s+1}}t^{-u-1}(E_{x_s,i}t^{u+1+z_s-z_{s+1}})E_{x_{s+1},x_{s+2}}t^{z_{s+1}-z_{s+2}}\nonumber\\
&=-\sum f^m_n(z_{s+1}=z_{s+1}+u+1)E_{i,x_{s+1}}t^{-u-1}(E_{x_s,i}t^{z_s-z_{s+1}})E_{x_{s+1},x_{s+2}}t^{z_{s+1}+u+1-z_{s+2}}\nonumber\\
&\quad-\sum f^m_n()E_{i,x_{s+1}}t^{-u-z_{s+1}-1}(E_{x_s,i}t^{u+1+z_s})E_{x_{s+1},x_{s+2}}t^{z_{s+1}-z_{s+2}}\nonumber\\
\end{align*}
and
\begin{align*}
&\quad\sum f^m_n()E_{x_{s},i}t^{z_{s}-z_{s+1}}E_{i,w}t^{-u-1}(E_{w,x_{s+2}}t^{u+1+z_{s+1}-z_{s+2}})\nonumber\\
&=\sum f^m_n()E_{i,w}t^{-u-1}E_{x_{s},i}t^{z_{s}-z_{s+1}}(E_{w,x_{s+2}}t^{u+1+z_{s+1}-z_{s+2}})\nonumber\\
&\quad+\sum f^m_n()(\delta_{i,i}E_{x_s,w}t^{z_s-z_{s+1}-u-1})(E_{w,x_{s+2}}t^{u+1+z_{s+1}-z_{s+2}})\nonumber\\
&\quad-\sum f^m_n()(E_{i,i}t^{z_s-z_{s+1}-u-1})(E_{x_s,x_{s+2}}t^{u+1+z_{s+1}-z_{s+2}})\nonumber\\
&\quad+\sum f^m_n()((u+1)\delta_{u+1,z_s-z_{s+1}}c)(E_{x_s,x_{s+2}}t^{u+1+z_{s+1}-z_{s+2}})\nonumber\\
&\quad+\sum f^m_n()((u+1)\delta_{u+1,z_s-z_{s+1}}\delta_{x_s,i})(E_{i,x_{s+2}}t^{u+1+z_{s+1}-z_{s+2}}),
\end{align*}
we have
\begin{align*}
&\quad B(s+1)_1+C(s)_2\\
&=-\sum f^m_n()E_{i,x_{s+1}}t^{-u-z_{s+1}-1}(E_{x_s,i}t^{u+1+z_s})E_{x_{s+1},x_{s+2}}t^{z_{s+1}-z_{s+2}}\nonumber\\
&\quad+\sum (f^m_n()-f^m_n(z_{s+1}=z_{s+1}+u+1))E_{i,w}t^{-u-1}E_{x_{s},i}t^{z_{s}-z_{s+1}}(E_{w,x_{s+2}}t^{u+1+z_{s+1}-z_{s+2}})\nonumber\\
&\quad+\sum f^m_n()(E_{x_s,w}t^{z_s-z_{s+1}-u-1})(E_{w,x_{s+2}}t^{u+1+z_{s+1}-z_{s+2}})\nonumber\\
&\quad-\sum f^m_n()(E_{i,i}t^{z_s-z_{s+1}-u-1})(E_{x_s,x_{s+2}}t^{u+1+z_{s+1}-z_{s+2}})\nonumber\\
&\quad+\sum f^m_n()((u+1)\delta_{u+1,z_s-z_{s+1}}c)(E_{x_s,x_{s+2}}t^{u+1+z_{s+1}-z_{s+2}})\nonumber\\
&\quad+\sum f^m_n()((u+1)\delta_{u+1,z_s-z_{s+1}}\delta_{x_s,i})(E_{i,x_{s+2}}t^{u+1+z_{s+1}-z_{s+2}}),
\end{align*}
Since we obtain
\begin{align*}
&\quad\sum (f^m_n()-f^m_n(z_{s+1}=z_{s+1}+u+1))E_{i,w}t^{-u-1}E_{x_{s},i}t^{z_{s}-z_{s+1}}(E_{w,x_{s+2}}t^{u+1+z_{s+1}-z_{s+2}})\nonumber\\
&=\sum (f^m_n()-f^m_n(z_{s+1}=z_{s+1}+u+1))E_{x_{s},i}t^{z_{s}-z_{s+1}}E_{i,w}t^{-u-1}(E_{w,x_{s+2}}t^{u+1+z_{s+1}-z_{s+2}})\nonumber\\
&\quad-\sum (f^m_n()-f^m_n(z_{s+1}=z_{s+1}+u+1))(E_{x_s,w}t^{z_s-z_{s+1}-u-1})(E_{w,x_{s+2}}t^{u+1+z_{s+1}-z_{s+2}})\nonumber\\
&\quad+\sum (f^m_n()-f^m_n(z_{s+1}=z_{s+1}+u+1))(E_{i,i}t^{z_s-z_{s+1}-u-1})(E_{x_s,x_{s+2}}t^{u+1+z_{s+1}-z_{s+2}})\nonumber\\
&\quad-\sum (f^m_n()-f^m_n(z_{s+1}=z_{s+1}+u+1))((u+1)\delta_{u+1,z_s-z_{s+1}}c)(E_{x_s,x_{s+2}}t^{u+1+z_{s+1}-z_{s+2}})\nonumber\\
&\quad-\sum (f^m_n()-f^m_n(z_{s+1}=z_{s+1}+u+1))((u+1)\delta_{u+1,z_s-z_{s+1}}\delta_{x_s,i})(E_{i,x_{s+2}}t^{u+1+z_{s+1}-z_{s+2}}),
\end{align*}
we have
\begin{align}
&\quad B(s+1)_1+C(s)_2\nonumber\\
&=-\sum f^m_n()E_{i,x_{s+1}}t^{-u-z_{s+1}-1}(E_{x_s,i}t^{u+1+z_s})E_{x_{s+1},x_{s+2}}t^{z_{s+1}-z_{s+2}}\nonumber\\
&\quad+\sum (f^m_n()-f^m_n(z_{s+1}=z_{s+1}+u+1))E_{x_{s},i}t^{z_{s}-z_{s+1}}E_{i,w}t^{-u-1}(E_{w,x_{s+2}}t^{u+1+z_{s+1}-z_{s+2}})\nonumber\\
&\quad-\sum (f^m_n()-f^m_n(z_{s+1}=z_{s+1}+u+1))(E_{x_s,w}t^{z_s-z_{s+1}-u-1})(E_{w,x_{s+2}}t^{u+1+z_{s+1}-z_{s+2}})\nonumber\\
&\quad+\sum (f^m_n()-f^m_n(z_{s+1}=z_{s+1}+u+1))(E_{i,i}t^{z_s-z_{s+1}-u-1})(E_{x_s,x_{s+2}}t^{u+1+z_{s+1}-z_{s+2}})\nonumber\\
&\quad-\sum (f^m_n()-f^m_n(z_{s+1}=z_{s+1}+u+1))((u+1)\delta_{u+1,z_s-z_{s+1}}c)(E_{x_s,x_{s+2}}t^{u+1+z_{s+1}-z_{s+2}})\nonumber\\
&\quad-\sum (f^m_n()-f^m_n(z_{s+1}=z_{s+1}+u+1))((u+1)\delta_{u+1,z_s-z_{s+1}}\delta_{x_s,i})(E_{i,x_{s+2}}t^{u+1+z_{s+1}-z_{s+2}})\nonumber\\
&\quad+\sum f^m_n()(\delta_{i,i}E_{x_s,w}t^{z_s-z_{s+1}-u-1})(E_{w,x_{s+2}}t^{u+1+z_{s+1}-z_{s+2}})\nonumber\\
&\qquad-\sum f^m_n()(E_{i,i}t^{z_s-z_{s+1}-u-1})(E_{x_s,x_{s+2}}t^{u+1+z_{s+1}-z_{s+2}})\nonumber\\
&\qquad+\sum f^m_n()((u+1)\delta_{u+1,z_s-z_{s+1}}\delta_{i,i}c)(E_{x_s,x_{s+2}}t^{u+1+z_{s+1}-z_{s+2}})\nonumber\\
&\qquad+\sum f^m_n()((u+1)\delta_{u+1,z_s-z_{s+1}}\delta_{x_s,i})(E_{i,x_{s+2}}t^{u+1+z_{s+1}-z_{s+2}})\nonumber\\
&=-\sum f^m_n()E_{i,x_{s+1}}t^{-u-z_{s+1}-1}(E_{x_s,i}t^{u+1+z_s})E_{x_{s+1},x_{s+2}}t^{z_{s+1}-z_{s+2}}\nonumber\\
&\quad+\sum f^m_{n+1}(z_{0}=z_{s+1}+u+1)(u+1)cE_{x_{s},i}t^{z_{s}-z_{s+1}}E_{i,w}t^{-u-1}(E_{w,x_{s+2}}t^{u+1+z_{s+1}-z_{s+2}})\nonumber\\
&\quad+\sum f^m_n(z_{s+1}=z_{s+1}+u+1)(E_{x_s,w}t^{z_s-z_{s+1}-u-1})(E_{w,x_{s+2}}t^{u+1+z_{s+1}-z_{s+2}})\nonumber\\
&\quad-\sum f^m_n(z_{s+1}=z_{s+1}+u+1)(E_{i,i}t^{z_s-z_{s+1}-u-1})(E_{x_s,x_{s+2}}t^{u+1+z_{s+1}-z_{s+2}})\nonumber\\
&\quad+\sum f^m_n(z_{s+1}=z_{s+1}+u+1)((u+1)\delta_{u+1,z_s-z_{s+1}}c)(E_{x_s,x_{s+2}}t^{u+1+z_{s+1}-z_{s+2}})\nonumber\\
&\quad+\sum f^m_n(z_{s+1}=z_{s+1}+u+1)((u+1)\delta_{u+1,z_s-z_{s+1}}\delta_{x_s,i})(E_{i,x_{s+2}}t^{u+1+z_{s+1}-z_{s+2}})\nonumber\\
&=-\sum f^m_n()E_{i,x_{s+1}}t^{-u-z_{s+1}-1}(E_{x_s,i}t^{u+1+z_s})E_{x_{s+1},x_{s+2}}t^{z_{s+1}-z_{s+2}}\nonumber\\
&\quad+\sum f^m_{n+1}(z_{0}=z_{s+1}+u+1)(u+1)cE_{x_{s},i}t^{z_{s}-z_{s+1}}E_{i,w}t^{-u-1}(E_{w,x_{s+2}}t^{u+1+z_{s+1}-z_{s+2}})\nonumber\\
&\quad+\sum f^m_n()z_{s+1}(E_{x_s,w}t^{z_s-z_{s+1}})(E_{w,x_{s+2}}t^{z_{s+1}-z_{s+2}})\nonumber\\
&\quad-\sum f^m_n()z_{s+1}(E_{i,i}t^{z_s-z_{s+1}})(E_{x_s,x_{s+2}}t^{z_{s+1}-z_{s+2}})\nonumber\\
&\quad+\sum f^m_n(z_{s+1}=z_{s+1}+u+1)((u+1)\delta_{u+1,z_s-z_{s+1}}c)(E_{x_s,x_{s+2}}t^{u+1+z_{s+1}-z_{s+2}})\nonumber\\
&\quad+\sum f^m_n(z_{s+1}=z_{s+1}+u+1)((u+1)\delta_{u+1,z_s-z_{s+1}}\delta_{x_s,i})(E_{i,x_{s+2}}t^{u+1+z_{s+1}-z_{s+2}}).\label{e2-1}
\end{align}
Similarly, we obtain
\begin{align}
&\quad B(s+1)_4+C(s)_1\nonumber\\
&=\sum (f^m_{n+1}(z_{0}=z_{s+1}+u+1))(u+1)cE_{x_{s},x_{s+1}}t^{z_{s}-z_{s+1}-u-1}E_{x_{s+1},i}t^{u+1}(E_{i,x_{s+2}}t^{z_{s+1}-z_{s+2}})\nonumber\\
&\quad+\sum f^m_n()z_{s+1}E_{x_{s},x_{s+2}}t^{z_{s}-z_{s+1}}(E_{i,i}t^{z_{s+1}-z_{s+2}})\nonumber\\
&\quad-\sum f^m_n()z_{s+1}E_{x_{s},x_{s+1}}t^{z_{s}-z_{s+1}}(E_{x_{s+1},x_{s+2}}t^{z_{s+1}-z_{s+2}})\nonumber\\
&\quad-\sum f^m_n(z_{s+1}=z_{s+1}+u+1)E_{x_{s},x_{s+2}}t^{z_{s}-z_{s+1}-u-1}((u+1)\delta_{u+1,z_{s+2}-z_{s+1}}c)\nonumber\\
&\quad-\sum f^m_n(z_{s+1}=z_{s+1}+u+1)E_{x_{s},i}t^{z_{s}-z_{s+1}-u-1}((u+1)\delta_{u+1,z_{s+2}-z_{s+1}}\delta_{x_{s+2},i})\nonumber\\
&\quad+\sum f^m_n()E_{x_{s},x_{s+1}}t^{z_{s}-z_{s+1}}(E_{i,x_{s+2}}t^{-z_{s+2}-u-1})E_{x_{s+1},i}t^{z_{s+1}+u+1}.\label{e2-2}
\end{align}
Finally, we compute $\eqref{An}+C(n-1)$. We divide it into 
\begin{align*}
\eqref{An}_1+C(n-1)_1,\ \eqref{An}_2+\eqref{An}_3+C(n-1)_4,\ \eqref{An}_4+C(n-1)_2+C(n-1)_3
\end{align*}
Similarly, we obtain
\begin{align}
&\quad\eqref{An}_1+C(n-1)_1\nonumber\\
&=-\sum f^m_{n+1}(z_0=z_n+u+1)(u+1)cE_{x_{n-1},i}t^{z_{n-1}-z_n}E_{i,w}t^{-u-1}(E_{w,j}t^{u+z_n+2})\nonumber\\
&\quad-\sum f^m_n()z_n(E_{i,i}t^{z_{n-1}-z_n})(E_{x_{n-1},j}t^{z_n+1})\nonumber\\
&\quad+\sum f^m_n()z_n(E_{x_{n-1},w}t^{z_{n-1}-z_n})(E_{w,j}t^{z_n+1})\nonumber\\
&\quad+\sum f^m_n(z_n=z_n+u+1)((u+1)\delta_{u+1,z_{n-1}-z_n}c)(E_{x_{n-1},j}t^{u+z_n+2})\nonumber\\
&\quad+\sum f^m_n(z_n=z_n+u+1)((u+1)\delta_{u+1,z_{n-1}-z_n}\delta_{x_{n-1},i})(E_{i,j}t^{u+z_n+2})\nonumber\\
&\quad-\sum f^m_n()E_{i,x_n}t^{-u-z_n-1}(E_{x_{n-1},i}t^{u+1+z_{n-1}})E_{x_n,j}t^{z_n+1},\label{e3-1}\\
&\quad\eqref{An}_2+\eqref{An}_3+\eqref{An}_4+C(n-1)_4\nonumber\\
&=\sum f^m_{n+1}(z_0=z_n+u+1)(u+1)c(E_{x_{n-1},w}t^{z_{n-1}-z_n-u-1})E_{w,i}t^{u+1}E_{i,j}t^{z_n+1}\nonumber\\
&\qquad+\sum f^m_n()E_{x_{n-1},x_n}t^{z_{n-1}-z_n}E_{x_n,i}t^{z_n+1}(E_{i,j})\nonumber\\
&\quad-\sum f^m_n()(z_n+1)E_{x_{n-1},x_n}t^{z_{n-1}-z_n}(E_{x_n,j}t^{z_n+1}),\label{e3-2}\\
&\quad C(n-1)_2+C(n-1)_3\nonumber\\
&=\sum f^m_n()E_{i,x_n}t^{-u-1}((u+1)\delta_{u+1,z_{n-1}-z_n}\delta_{i,x_{n-1}}c)E_{x_n,j}^{z_{n-1}+u+2}\nonumber\\
&\quad+\sum f^m_n()((u+1)\delta_{u+1,z_{n-1}-z_n})E_{i,i}t^{u+1}E_{x_{n-1},j}t^{z_n+1}.\label{e3-4}
\end{align}

Similarly, by a direct computation, we obtain
\begin{align}
&\quad[\sum E_{i,w}t^{-u-1}E_{w,i}t^{u+1},[\sum f^m_1()E_{i,w}t^{-z_1-1}E_{w,j}t^{z_1+1}]\nonumber\\
&=\sum f^m_1()E_{i,w}t^{-u-1}(E_{w,x_1}t^{u-z_1})E_{x_1,j}t^{z_1+1}\nonumber\\
&\quad-\sum f^m_1()E_{i,i}E_{i,x_1}t^{-z_1-1}E_{x_1,j}t^{z_1+1}\nonumber\\
&\quad-\sum f^m_2(z_1=z_2+u+1)(u+1)cE_{i,i}t^{-z_1-1}E_{i,x_1}t^{-u-1}E_{x_1,j}t^{z_1+u+1}\nonumber\\
&\quad+\sum f^m_1()(z_1+1)cE_{i,x_1}t^{-z_1-1}E_{x_1,j}t^{z_1+1}\nonumber\\
&\quad+\sum f^m_1()E_{i,x_1}t^{-1-z_1}E_{x_1,i}t^{z_1+1}E_{i,j}\nonumber\\
&\quad+\sum f^m_1(z_2=z_1+u+1)(u+1)cE_{i,x_1}t^{-1-z_1-u}E_{x_1,i}t^{u+1}E_{i,j}t^{z_1+1}.\label{e4-1}
\end{align}
Then, we obtain
\begin{align}
&\quad\sum_{n=2}^m\eqref{e1-0}+\eqref{e4-1}_1+\eqref{e4-1}_4\nonumber\\
&=\sum_{n=1}^{m+1}f^{m+1}_n()E_{i,x_1}t^{z_1-z_2}E_{x_1,x_2}t^{z_2-z_3}\cdots E_{x_{n-1},x_n}t^{z_{n-1}-z_n}E_{x_n,j}t^{z_n+1},\\
&\quad\sum_{n=2}^m\eqref{e1-1}_1+\eqref{e4-1}_2\nonumber\\
&=-E_{i,i}\sum_{n=1}^{m}f^{m}_n()E_{i,x_1}t^{z_1-z_2}E_{x_1,x_2}t^{z_2-z_3}\cdots E_{x_{n-1},x_n}t^{z_{n-1}-z_n}E_{x_n,j}t^{z_n+1},\\
&\quad\sum_{n=2}^m\eqref{e3-2}_2+\eqref{e4-1}_5\nonumber\\
&=\sum_{n=1}^{m}f^{m}_n()E_{i,x_1}t^{z_1-z_2}E_{x_1,x_2}t^{z_2-z_3}\cdots E_{x_{n-1},x_n}t^{z_{n-1}-z_n}E_{x_n,i}t^{z_n+1}E_{i,j},\\
&\quad\eqref{e1-1}_2+\sum_{n=2}^m(\eqref{e1-3}_1+\eqref{e1-2}_1+\sum_{s=1}^{n-1}(\eqref{e2-3}_1+\eqref{e2-1}_2)+\eqref{e3-1}_1+\eqref{e3-4}_1)+\eqref{e4-1}_3=0,\\
&\quad\sum_{n=2}^m(\eqref{e1-1}_3+\eqref{e1-2}_3+\sum_{s=1}^{n-1}(\eqref{e2-1}_3+\eqref{e2-2}_3)+\eqref{e3-1}_3+\eqref{e3-2}_3)=0,\\
&\quad\sum_{n=2}^m(\eqref{e1-3}_2+\sum_{s=1}^{n-1}(\eqref{e2-4})+\eqref{e3-4}_2
=0,\\
&\quad\sum_{n=2}^m(\eqref{e1-2}_2+\sum_{s=1}^{n-1}(\eqref{e2-1}_4+\eqref{e2-2}_2)+\eqref{e3-1}_2=0,\\
&\quad\eqref{e1-2}_6+\sum_{s=1}^{n-1}(\eqref{e2-1}_1+\eqref{e2-2}_6)+\eqref{e3-1}_6=0,\\
&\quad\sum_{n=2}^m(\eqref{e1-2}_4+\sum_{s=1}^{n-1}(\eqref{e2-1}_5+\eqref{e2-2}_4)+\eqref{e3-1}_4)=0,\\
&\quad\sum_{n=2}^m(\eqref{e1-2}_5+\sum_{s=1}^{n-1}(\eqref{e2-1}_6+\eqref{e2-2}_7)+\eqref{e3-1}_5)=0,\\
&\quad\sum_{n=2}^m(\sum_{s=1}^{n-1}(\eqref{e2-3}_2+\eqref{e2-2}_1)+\eqref{e3-2}_1+\eqref{e4-1}_6=0.
\end{align}
\section*{Statements and Declarations}
\subsection*{Data availability}
The authors confirm that the data supporting the findings of this study are available within the article and its supplementary materials.
\subsection*{Conflicts of interest}
The authors declare no conflicts of interest associated with this manuscript.
\subsection*{Ethical Approval declaration}
Ethical approval was not required for this study as it did not involve human or animal subjects.


\begin{thebibliography}{10}
\bibitem{D1}
V.~G. Drinfeld.
\newblock Hopf algebras and the quantum {Y}ang-{B}axter equation.
\newblock {\em Dokl. Akad. Nauk SSSR}, 283(5):1060--1064, 1985.

\bibitem{D2}
V.~G. Drinfeld.
\newblock A new realization of {Y}angians and of quantum affine algebras.
\newblock {\em Dokl. Akad. Nauk SSSR}, 296(1):13--17, 1987.

\bibitem{Gu2}
N.~Guay.
\newblock Cherednik algebras and {Y}angians.
\newblock {\em Int. Math. Res. Not.}, (57):3551--3593, 2005.

\bibitem{Gu1}
N.~Guay.
\newblock Affine {Y}angians and deformed double current algebras in type {A}.
\newblock {\em Adv. Math.}, 211(2):436--484, 2007.

\bibitem{GNW}
N.~Guay, H.~Nakajima, and C.~Wendlandt.
\newblock Coproduct for {Y}angians of affine {K}ac-{M}oody algebras.
\newblock {\em Adv. Math.}, 338:865--911, 2018.

\bibitem{K1}
R.~Kodera.
\newblock On {G}uay's evaluation map for affine {Y}angians.
\newblock {\em Algebr. Represent. Theory}, 24(1):253--267, 2021.


\bibitem{KU}
R.~Kodera and M.~Ueda.
\newblock Coproduct for affine {Y}angians and parabolic induction for
  rectangular {$W$}-algebras.
\newblock {\em Lett. Math. Phys.}, 112(1):Paper No. 3, 37, 2022.

\bibitem{MNT}
A.~Matsuo, K.~Nagatomo, and A.~Tsuchiya.
\newblock Quasi-finite algebras graded by {H}amiltonian and vertex operator
  algebras.
\newblock {\em London Math. Soc. Lecture Note Ser.}, 372:282--329, 2010.

\bibitem{U2}
M.~Ueda.
\newblock Affine super {Y}angians and rectangular {$W$}-superalgebras.
\newblock {\em J. Math. Phys.}, 63(5):Paper No. 051701, 34, 2022.

\end{thebibliography}
\end{document}